\newcommand{\rrVert}{\Vert}
\newcommand{\rrvert}{\vert}
\newcommand{\llVert}{\Vert}
\newcommand{\llvert}{\vert}
\newtheorem{theorem}{Theorem}[section]
\newtheorem{proposition}[theorem]{Proposition}
\newtheorem{lemma}[theorem]{Lemma}
\newcommand{\ww}{\tilde}
\def\mC{\mathcal}
\newcommand{\eps}{\varepsilon}
\newcommand{\D}{\Delta}
\newcommand{\dtv}{d_{\mathrm{TV}}}
\newcommand{\dw}{d_{\mathrm{W}}}
\newcommand{\dk}{d_{\mathrm{K}}}
\newcommand{\bigo}{\mathrm{O}}
\newcommand{\lito}{\mathrm{o}}
\newcommand{\dloc}{d_{\mathrm{loc}}}
\newcommand{\toinf}{\to\infty}
\newcommand{\I}{\mathrm{I}}
\newcommand{\Po}{\mathrm{Po}}
\newcommand{\TP}{\mathrm{TP}}
\newcommand{\Bi}{\operatorname{Bi}}
\newcommand{\Be}{\operatorname{Be}}
\newcommand{\N}{\mathrm{N}}
\newcommand{\IE}{\mathbb{E}}
\newcommand{\IP}{\mathbb{P}}
\newcommand{\Var}{\operatorname{Var}}
\newcommand{\Cov}{\operatorname{Cov}}
\newcommand{\sgn}{\operatorname{sgn}}
\newcommand{\law}{\mathscr{L}}
\newcommand{\IZ}{\mathbb{Z}}
\newcommand{\IR}{\mathbb{R}}
\newcommand{\eqref}[1]{(\ref{#1})}
\newcommand{\binom}[2]{{#1 \choose #2}}
\def\mysigma{{{s}}}
\def\vfrac#1#2{(#1)/#2}
\def\sklfrac#1#2{(#1/#2)}
\begin{document}
\begin{frontmatter}

\title{Local limit theorems via Landau--Kolmogorov inequalities}
\runtitle{LLTs via Landau--Kolmogorov inequalities}

\begin{aug}
\author[A]{\inits{A.}\fnms{Adrian} \snm{R\"ollin}\thanksref{A}\ead[label=e1]{adrian.roellin@nus.edu.sg}\ead[label=u1,url]{www.stat.nus.edu.sg/\textasciitilde staar}}
\and
\author[B]{\inits{N.}\fnms{Nathan} \snm{Ross}\corref{}\thanksref{B}\ead[label=e2]{nathan.ross@unimelb.edu.au}\ead[label=u2,url]{www.ms.unimelb.edu.au/\textasciitilde rossn1}}
\address[A]{Department of Statistics and Applied Probability,
National University of Singapore,
6 Science Drive 2,
Singapore 117546.
\printead{e1,u1}}
\address[B]{Department of Mathematics and Statistics,
University of Melbourne, VIC 3010, Australia.\\
\printead{e2,u2}}
\end{aug}

\received{\smonth{5} \syear{2013}}
\revised{\smonth{12} \syear{2013}}

%
\begin{abstract}
In this article, we prove new inequalities between some common probability
metrics. Using these inequalities, we obtain novel local limit theorems for
the magnetization in the Curie--Weiss model at high temperature, the
number of
triangles and isolated vertices in Erd\H{o}s--R\'enyi random graphs, as
well as the
independence number in a geometric random graph. We also give upper
bounds on
the rates of convergence for these local limit theorems and also for some
other probability metrics. Our proofs are based on the Landau--Kolmogorov
inequalities and new smoothing techniques.
\end{abstract}

%
\begin{keyword}
\kwd{Curie--Weiss model}
\kwd{Erd\H{o}s--R\'enyi random graph}
\kwd{Kolmogorov metric}
\kwd{Landau--Kolmogorov inequalities}
\kwd{local limit metric}
\kwd{total variation metric}
\kwd{Wasserstein metric}
\end{keyword}

\end{frontmatter}

\section{Introduction}\label{secintro}

If two probability distributions are close in some metric are they also close
in other stronger or different metrics?
General inequalities between many common probability metrics are known; see
for example Gibbs and Su \cite{Gibbs2002} for a compilation
of such results. But
one may
wonder if it is possible to sharpen such inequalities
by imposing simple conditions on the distributions
under consideration. An early attempt in this direction was made by
McDonald \cite{McDonald1979}, who was able to deduce
a local limit theorem for
sums of
integer valued random variables from a central limit theorem by
imposing an additional ``smoothness'' condition on the distribution of
the sum. In this article, we take this
approach much further by providing general inequalities between some
common probability metrics with integer support that contain an additional
factor that measures the ``smoothness'' of the distributions
under consideration; the smaller this factor is, the better the bounds obtained.

To state a simple version of our main result, we need some basic notation.
For a function $f$ with domain the
integers, denote for $1\leq p < \infty$,
\begin{eqnarray*}
\llVert f\rrVert _p= \biggl(\sum_{i\in\IZ}
\bigl\llvert f(i)\bigr\rrvert ^p \biggr)^{1/p}
\end{eqnarray*}
and $\llVert  f\rrVert _\infty=\sup_{i\in\IZ} \llvert
f(i)\rrvert $, and
also define the operators $\D^n$ recursively by
\begin{eqnarray*}
\D^0 f(k) =f(k) \quad \mbox{and}\quad  \D^{n+1} f(k)=
\D^{n}f(k+1)-\D^n f(k).
\end{eqnarray*}

A consequence of our main
theoretical result, Theorem~\ref{thm2} below, is that if $F$ and $G$ are
distribution functions of integer supported distributions, then
for some universal constant $C$,
%
\begin{equation}
\label{1} \llVert \D F-\D G\rrVert _\infty\leq C \llVert F-G\rrVert
_\infty^{1/2} \bigl( \bigl\llVert \D^3 F\bigr
\rrVert _1+\bigl\llVert \D^3 G\bigr\rrVert _1
\bigr)^{1/2}.
\end{equation}
Here $(\llVert \D^3 F\rrVert _1+\llVert \D^3 G\rrVert _1)^{1/2}$ is the smoothing factor
referred to above, so the inequality says that if we can bound it and the
supremum of the pointwise differences of the distribution functions $F$ and
$G$ (called the \emph{uniform} or \emph{Kolmogorov metric}), then we
have a
bound on the left-hand side of \eqref{1}, the supremum of the
differences of
point probabilities; the latter is a quantity that will allow us to obtain
local limit theorems.

In practice, it may appear difficult to obtain bounds on the smoothing term
since it is defined in terms of quantities we wish to study. In this article,
we think of $F$ as being a complicated distribution of interest (e.g., the
number of triangles in a random graph model) and of $G$ as a well-known
distribution which we are using to approximate $F$ (e.g., a discretized
normal or a translated Poisson distribution). Thus, bounding $\llVert \D
^3 G\rrVert _1$
should not be difficult -- we provide what is needed for our theory and
applications in Lemma~\ref{lem7} below -- so the only real difficulty in
using \eqref{1} in application is bounding $\llVert \D^3 F\rrVert _1$ and in
Section~\ref{sec2} we develop tools for this purpose.

To get a sense of the style of result we aim to achieve, we apply
\eqref{1}
in the setting of the approximation of the binomial distribution by the
normal, where much is known.

\subsection*{Binomial local limit theorem}
Let $X\sim\Bi(n,p)$ and let $Y$ have a discretized normal
distribution with
mean $\mu:=np$ and variance $\sigma^2:=np(1-p)$, that is,
%
\begin{equation}
\label{2} \IP(Y=k)=\frac{1}{\sqrt{2\uppi}}\int_{(k-1/2-\mu)/\sigma
}^{(k+1/2-\mu)/\sigma}
\mathrm{e}^{-u^2/2}\,\mathrm{d}u.
\end{equation}
If $F$ and $G$ are the distribution functions of $X$ and $Y$, then it
is well
known
that $\llVert  F-G\rrVert _\infty\asymp\sigma^{-1} \asymp
n^{-1/2}$; here and
below the limits and asymptotics are as $n\to\infty$.
Also $\D^3 G(k)=\D^2 \IP(Y=k)$ and some basic calculus and \eqref{2}
imply that
$\llVert \D^2 \IP(Y=\cdot)\rrVert _1\asymp\sigma
^{-2}\asymp n^{-1}$.
Due to the closeness of the binomial distribution to the normal, we anticipate
$\llVert \D^3 F\rrVert _1$ to be of this same order as
$\llVert \D^3 G\rrVert _1$ and
in fact
Proposition~\ref{prop1} below bounds this term as $\llVert \D^3
F\rrVert _1=\bigo(\sigma^{-2})$.
Putting this all into \eqref{1}, we have that
\begin{eqnarray*}
\llVert \D F-\D G\rrVert _\infty=\bigo\bigl(\sigma ^{-3/2}
\bigr)=\bigo\bigl(n^{-3/4}\bigr).
\end{eqnarray*}
In fact, it is well known that
\begin{eqnarray*}
\llVert \D F-\D G\rrVert _\infty\asymp\sigma^{-2} \asymp
n^{-1}.
\end{eqnarray*}
This example illustrates that we do not expect our approach to yield tight
rates in application. However, our purpose here is to provide a
method that can be applied to yield new convergence results where
little is
known and, as a by-product of our method of proof, to give some upper
bounds on
the rates of convergence. We emphasize that apart from well known results
about sums of independent random variables,
rates of convergence in local limit theorems are not common in the literature:
such results are typically difficult to obtain.
To the best of our knowledge, all of our results and upper bounds on
rates are
new. Outside of a few remarks we will not address the interesting
but more theoretical question of the optimality of the bounds obtained -- we
shall focus on applications.

The remainder of the paper is organized as follows. In Section~\ref
{sec1}, we
prove our main theoretical results, inequalities of the form \eqref
{1}; these
will follow from discrete versions of the classical Landau--Kolmogorv
inequalities. In Section~\ref{sec2}, we
develop tools to bound $\llVert \D^3F\rrVert _1$ and the
analogous quantities appearing
on the right hand side of generalizations of \eqref{1}. In
Section~\ref{sec3},
we illustrate our approach in a few applications, in particular we
obtain new
local limit theorems with bounds on the rates of convergence for the
magnetization in the Curie--Weiss model, the number of isolated vertices and
triangles in Erd\H{o}s--R\'enyi random graphs and the independence
number of a geometric
random graph. We also obtain other new limit theorems and bounds on
rates for
some of these applications.

\section{Main result}\label{sec1}

Our main theoretical result
is easily derived from a discrete version of
the classical Landau inequality (see Hardy, Landau and Littlewood \cite{Hardy1935},
Section~3)
which relates the
norm of a function with that of its first and second derivatives. There
are many extensions and embellishments of this inequality
in the analysis literature; see
Kwong and Zettl \cite{Kwong1992} for a book length treatment.

\begin{theorem}[({Kwong and Zettl \cite{Kwong1992}, Theorem~4.1})]\label{thm1}
Let $k$ and $n$ be integers with $1\leq k < n$ and let $1\leq p,
q,r\leq
\infty$ given. There is a positive number $C:=C(n,k,p,q,r)$ such that
%
\begin{equation}
\label{2b} \bigl\llVert \D^k f\bigr\rrVert _q \leq C
\llVert f\rrVert ^{\alpha}_p \bigl\llVert \D^n f
\bigr\rrVert ^{\beta}_r
\end{equation}
for all $f\dvtx \IZ\to\IR$ with $\llVert  f\rrVert _p<\infty$
and $\llVert \D^n
f\rrVert _r<\infty$, if and only if
\begin{eqnarray*}
\frac{n}{q} \leq\frac{n-k}{p} + \frac{k}{r},
\end{eqnarray*}
$\alpha=1-\beta$ and
\begin{eqnarray*}
\beta= \frac{k-1/q+1/p}{n - 1/r + 1/p}.
\end{eqnarray*}
\end{theorem}

\begin{remark}
Much of the literature surrounding these inequalities is concerned with
finding the optimal value of the constant $C$. In the case that $n=2$ and
either $p=q=r=1$ or $n=3$, $p=q=\infty$, and $r=1$, we can take
$C=\sqrt{2}$;
see Kwong and Zettl \cite{Kwong1992}, Theorem~4.2. These are two of
the main cases discussed
below. Also, an inductive argument in $n$ implies that in the former case
above we may take $C=2^{(n-1)/2}$ for $n\geq2$ and in the latter
$C=2^{(n-2)/2}$ for $n\geq3$. These facts are not critical in what
follows, so
for the sake of simplicity we do not discuss such constants in further detail.
\end{remark}

The key connection between Theorem~\ref{thm1} and
what will follow is that if $F$
and $G$ are distribution functions of integer supported
probability distributions, then some well-known probability metrics can be
expressed as
\begin{eqnarray*}
\begin{array}{rcl@{\qquad}l}
\dk(F,G) &=& \llVert F-G\rrVert _\infty& \mbox{(Kolmogorov
metric),}
\\
\dw(F,G) &=&\llVert F-G\rrVert _1& \mbox{(Wasserstein
metric),}
\\
\dloc(F,G) &=&\llVert \D F-\D G\rrVert _\infty& \mbox {(local
metric),}
\\
\dtv(F,G) &=&{\tfrac{1}{2}}\llVert \D F-\D G\rrVert _1 & \mbox{(total variation metric)}.
\end{array}
\end{eqnarray*}
Note that $\dloc(F,G)$ is the supremum of point probabilities between the
distributions
given by $F$ and $G$ and is the appropriate metric to use to show local limit
theorems.

We are now in a position to state our main theoretical result, but first
a last bit of notation. Let $F$
be a distribution function with support on $\IZ$. If $m$ is a
positive integer, define
\begin{eqnarray*}
\bar{F}^m (j) = \frac{\bar{F}(j)+\cdots+F(j-m+1)}{m};
\end{eqnarray*}
this is the distribution function of the convolution of $F$ with the
uniform distribution on $\{0,\dots,m-1\}$. Note that $\bar{F}^1 = F$ and
that if the integer valued random variable $X$ has distribution
function $F$,
then
%
\begin{equation}
\label{3} \D \bar{F}^m(j)=\frac{1}{m} \IP(j-m+1<X\leq j+1).
\end{equation}

%
\begin{theorem}\label{thm2} If $l\geq1$ and $m\geq1$ are integers,
then there
is a constant
$C>0$ such that, for all distribution functions $F$ and $G$
of integer supported probability distributions,
\begin{eqnarray*}
d_1\bigl(\bar{F}^m,\bar{G}^m\bigr) \leq C
d_2(F,G)^{1-\beta} \bigl(\bigl\llVert \D^{l+1}
\bar{F}^m\bigr\rrVert _1+\bigl\llVert \D^{l+1}
\bar{G}^m\bigr\rrVert _1\bigr)^{\beta}
\end{eqnarray*}
for the following combinations of $d_1$, $d_2$ and $\beta$:
%
\begin{equation}
\label{4} %
\begin{tabular}{@{}llll@{}}
\hline
&$d_1$ &
$d_2$ & $\beta$
\\
\hline
\mbox{\textup{(i)}}&$\dloc$& $\dtv$& $1/l$
\\
\mbox{\textup{(ii)}}&$\dloc$& $\dk$& $1/l$
\\
\mbox{\textup{(iii)}}&$\dloc$& $\dw$& $2/(l+1)$
\\
\mbox{\textup{(iv)}}&$\dtv$& $\dw$& $1/(l+1)$
\\
\mbox{\textup{(v)}}&$\dk$& $\dw$& $1/(l+1)$
\\
\hline
\end{tabular} %
\end{equation}
\end{theorem}

\begin{pf}
To prove (ii)--(iv), apply Theorem~\ref{thm1} to the function
$\bar{F}^m-\bar{G}^m$,
with $n=l+1$, $k=r=1$ and the following values of $p$ and $q$:
\begin{eqnarray*}
\mbox{(ii)}\enskip q=\infty,p=\infty, \qquad \mbox{(iii)}\enskip q=\infty,p=1, \qquad \mbox{(iv)}\enskip q=1,p=1;
\end{eqnarray*}
then use $d_2(\bar{F}^m,\bar{G}^m)\leq d_2(F, G)$ and the triangle inequality. For
(i) and (v)
use (ii) and (iv), respectively, and then use the fact that $\dk
\leq\dtv$.
\end{pf}

\begin{Remarks*}
\begin{enumerate}[3.]
\item[1.] We mainly use
Theorem~\ref{thm2} with $m=1$, where its meaning is most
transparent.
For $m>1$, the following direct consequence of \eqref{3}
shows that $\bar{F}^m$ can be used to prove
local limit theorems for ``clumped'' probabilities
where the corresponding pointwise results
may not hold.

%
\begin{lemma}\label{lem1}
If $X$ and $Y$ are integer valued random variables with respective
distribution
functions $F$ and $G$, then
\begin{eqnarray*}
\sup_{k\in\IZ}\bigl\llvert \IP(k<X\leq k+m)-\IP(k<Y\leq k+m)\bigr
\rrvert = m \dloc\bigl(\bar{F}^m, \bar{G}^m\bigr).
\end{eqnarray*}
\end{lemma}

\item[2.] Theorem~\ref{thm2} is really a special case of Theorem~\ref
{thm1} with
$k=r=1$, but it is clear that similar statements hold by applying
Theorem~\ref{thm1} to other values of $k$ and $r$. We choose the value $r=1$
because we are able to bound $\llVert \D^n F\rrVert _1$.
Using the obvious inequality
$\llVert \D^n F\rrVert _\infty\leq\llVert \D^{n+1}
F\rrVert _1$, we could also
usefully apply
Theorem~\ref{thm1} with $r=\infty$, but this change has no effect on the
value of $\beta$ for a given $k$, $q$ and $p$. The term $\llVert
\D^2
F\rrVert _\infty$
also appears in the local limit theorem results of McDonald \cite{McDonald1979} and
Davis and McDonald \cite{Davis1995}. However, the crucial
advantage of $\llVert \D^n
F\rrVert _1$ over
$\llVert \D^n F\rrVert _\infty$ is that the former is -- as
we will
show -- amenable to
bounds via probabilistic techniques, whereas the latter seems difficult to
handle directly.
\item[3.] Inequality \eqref{2b} cannot be improved in general, but since we
are considering
such inequalities only over the class
of functions that are the difference of two distribution functions, it is
possible that Theorem~\ref{thm2} could be sharpened, either in
increasing the
exponents or decreasing the constants. Also note that using the triangle
inequality in Theorem~\ref{thm2} causes some loss of sharpness, but we gain
the ability to bound the terms appearing in application, which is our main
focus.
\end{enumerate}
\end{Remarks*}

\section{Estimating the measure of smoothness}\label{sec2}

In this section, we develop techniques to bound $\llVert \D
^n\bar{F}^m\rrVert _1$.
Our main tools are Theorems \ref{thm3} and \ref{thm4} below but
first we state some simple results.
To lighten
the notation somewhat, write
\begin{eqnarray*}
D_{n,m}(F) := m \bigl\llVert \D^{n+1} \bar{F}^m\bigr
\rrVert _1,
\end{eqnarray*}
or for a random variable $W$ with distribution function $F$,
write $D_{n,m}(\law(W))$ for $D_{n,m}(F)$,
and $D_n$ for $D_{n,1}$. Furthermore, define recursively the difference
operators
\begin{eqnarray*}
\D^n_m F(j) = \D^{n-1}_m F(j+m)-
\D^{n-1}_m F(j),
\end{eqnarray*}
where $\D^0_m F(j) = F(j)$.

Note that for a random variable $W$,
\begin{eqnarray*}
D_1\bigl(\law(W)\bigr)=2\dtv\bigl(\law(W),\law(W+1)\bigr).
\end{eqnarray*}
By a well-known representation of the total variation distance (see, for
example,
Gibbs and Su \cite{Gibbs2002}), we have
%
\begin{equation}
\label{5} D_1\bigl(\law(W)\bigr)=\sup_{\llVert  g\rrVert _\infty\leq1}
\IE\D g(W).
\end{equation}
Some of our techniques below are extensions of those for bounding the quantity
on the right-hand side \eqref{5}, and so we use the following generalization
of \eqref{5} in bounding $D_{n,m}(F)$.

%
\begin{lemma}\label{lem2} Let $n$ and $m$ be nonnegative integers. Let
$W$ be
a random variable with integer support. Then
\begin{eqnarray*}
D_{n,m}\bigl(\law(W)\bigr) = \sup_{\llVert  g\rrVert _\infty\leq
1}\IE
\D_m^n g(W).
\end{eqnarray*}
\end{lemma}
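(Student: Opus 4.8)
The plan is to reduce the identity to $\ell^1$–$\ell^\infty$ duality together with discrete summation by parts, after rewriting $\D^{n+1}\-F^m$ in probabilistic language. First I would set $p(j)=\IP[W=j]$ and use the fact, recorded in the text, that $\-F^m$ is the distribution function of $W+U$ with $U\sim\U\{0,\dots,m-1\}$ independent of $W$. Then its first difference $\D\-F^m(j)=\-F^m(j+1)-\-F^m(j)=\IP[W+U=j+1]$ is, up to a unit shift, the probability mass function $q$ of $W+U$, where $q(k)=m^{-1}\sum_{l=0}^{m-1}p(k-l)$. Since the $\ell^1$ norm is invariant under shifts and $\D^{n+1}\-F^m=\D^n(\D\-F^m)$, this yields
\be{
  D_{n,m}(\law(W)) \;=\; m\,\norm{\D^{n+1}\-F^m}_1 \;=\; m\,\norm{\D^{n}q}_1 .
}

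Next I would establish the auxiliary identity $\sup_{\norm{g}_\infty\le 1}\IE\,\D^n g(V)=\norm{\D^n q}_1$, valid for any integer-valued $V$ with mass function $q$. Indeed, since $q\in\ell^1(\IZ)$ and $\D^n g$ is bounded, summation by parts is legitimate and transfers the differences onto $q$, giving $\sum_j q(j)\D^n g(j)=\sum_j\bigl[(E^{-1}-I)^nq\bigr](j)\,g(j)$, where $E$ denotes the unit shift. Taking the supremum over $\norm{g}_\infty\le 1$ and using $\norm{a}_1=\sup_{\norm{g}_\infty\le 1}\sum_j a(j)g(j)$ (the supremum being attained at $g=\sgn(a)$) produces $\norm{(E^{-1}-I)^nq}_1$, which equals $\norm{\D^n q}_1$ because $(E^{-1}-I)^n q$ differs from $\D^n q$ only by an overall sign and a shift. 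Applied with $V=W+U$, this turns the previous display into
\be{
  D_{n,m}(\law(W)) \;=\; m\,\sup_{\norm{g}_\infty\le 1}\IE\,\D^n g(W+U) .
}

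Finally I would condition on $W$: by independence of $U$, $\IE\,\D^n g(W+U)=m^{-1}\sum_{l=0}^{m-1}\IE\,\D^n g(W+l)=m^{-1}\,\IE\sum_{l=0}^{m-1}\D^n g(W+l)$, and the inner sum telescopes, $\sum_{l=0}^{m-1}\D^n g(j+l)=\D^{n-1}g(j+m)-\D^{n-1}g(j)=\D^n_m g(j)$. Hence $\IE\,\D^n g(W+U)=m^{-1}\IE\,\D^n_m g(W)$, the two factors of $m$ cancel, and $D_{n,m}(\law(W))=\sup_{\norm{g}_\infty\le 1}\IE\,\D^n_m g(W)$, as required. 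I do not anticipate a genuine obstacle here: the only thing requiring care is the bookkeeping of the harmless unit shifts and signs introduced by the difference operators, plus checking that the interchanges of expectation with summation and of supremum with summation are justified — all of which follow from absolute summability of $q$ and boundedness of $g$. One could alternatively dispense with probability and argue purely through the shift-operator identity $m\,\D^{n+1}\-F^m=E^{1-m}(E^m-I)(E-I)^nF$, but the route above stays closest to the paper's framework.
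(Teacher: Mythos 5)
Your route --- representing $D_{n,m}$ through the pmf $q$ of $W+U$ with $U$ uniform on $\{0,\dots,m-1\}$ and independent of $W$, invoking $\ell^1$--$\ell^\infty$ duality, and finally conditioning on $W$ --- is cleaner and more explicit than the paper's, which reindexes the defining sum directly for $n=1$ and merely asserts that ``the general case is similar''. Steps one and two of your argument are correct, and so is the telescoping identity $\sum_{l=0}^{m-1}\D^n g(j+l)=\D^{n-1}g(j+m)-\D^{n-1}g(j)$. The error is the very last equality: $\D^{n-1}g(j+m)-\D^{n-1}g(j)$ is $\D_m$ applied once to $\D^{n-1}g$, that is $\D^{n-1}\D_m g(j)$ (one $m$-step difference composed with $n-1$ unit differences), \emph{not} $\D^n_m g(j)$. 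By the paper's recursion $\D^n_m g=(\D_m)^n g$ is the $n$-fold iterate of the $m$-step difference, so $\D^n_m g(j)=\sum_{k=0}^n\binom{n}{k}(-1)^{n-k}g(j+km)$, and for $n\geq2$, $m\geq2$ this is genuinely different from $\D^{n-1}\D_m g(j)$. Concretely, for $n=m=2$ the two operators give $g(j+4)-2g(j+2)+g(j)$ and $g(j+3)-g(j+2)-g(j+1)+g(j)$, and taking $W$ uniform on $\{0,1\}$ gives $D_{2,2}(\law(W))=2\norm{\D^3\-F^2}_1=2$ while $\sup_{\norm{g}_\infty\leq1}\IE\D^2_2 g(W)=4$.

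So what your argument actually proves, correctly, is
\be{
  D_{n,m}(\law(W))=\sup_{\norm{g}_\infty\leq1}\IE\,\D^{n-1}\D_m g(W),
}
and the computation above shows that this --- not the version with $\D^n_m$ --- is the identity consistent with the definition $D_{n,m}(F)=m\norm{\D^{n+1}\-F^m}_1$. In other words the lemma as printed is false once $n\geq2$ and $m\geq2$; your derivation in fact exposes the slip. It is true when $n=1$ (the only case the paper writes out) or when $m=1$, since then $\D^n_m$ and $\D^{n-1}\D_m$ coincide, which is why the issue is invisible in most of the paper's applications. The local fix to your write-up is to replace the final ``$=\D^n_m g(j)$'' by ``$=\D^{n-1}\D_m g(j)$'' and to flag the resulting discrepancy with the lemma as stated.
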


\begin{pf} We only show the case $n=1$; the general case is similar.
Denote $p_i=\IP[W=i]$ for $i\in\IZ$. We have
\begin{eqnarray*}
\IE\D_m g(W) & =& \sum_{i\in\IZ}p_i
\bigl(g(i+m)-g(i) \bigr)
\\
& =& \sum_{i\in\IZ}(p_{i-m}-p_i)
g(i)
\\
& =& m\sum_{i\in\IZ} \biggl(\frac{p_{i-m}+\cdots+p_{i-1}}{m} -
\frac{p_{i-m+1}+\cdots+p_i}{m} \biggr) g(i)
\\
& =& m\sum_{i\in\IZ} \bigl(\D \bar{F}^{m}(i-2)-\D
\bar{F}^{m}(i-1) \bigr) g(i)
\\
& =& -m\sum_{i\in\IZ}\D^2\bar{F}^{m}(i-2)
g(i),
\end{eqnarray*}
where in the fourth equality we have used \eqref{3}.
We see that for all $g$ such that $\llVert  g\rrVert _\infty
\leq1$, $ \IE\D
_m g(W)
\leq
m \llVert \D^{2} \bar{F}^m\rrVert _1 $, and choosing $g(i) = -\sgn
\D^2\bar{F}^{m}(i-2)$
implies
the claim.
\end{pf}

The following sequence of lemmas provide tools for bounding
$D_{n,m}(F)$. The
proofs are mostly straightforward. We assume that all random variables are
integer valued.

%
\begin{lemma}\label{lem3} Let $n$ and $m$ be positive integers.
If $W$ is a random variable, then
\begin{eqnarray*}
D_{n,m}\bigl(\law(W)\bigr)\leq m D_{n,1}\bigl(\law(W)\bigr).
\end{eqnarray*}
\end{lemma}

\begin{pf}
If $W$ has distribution function $F$, then the triangle inequality implies
\begin{eqnarray*}
D_{n,m}\bigl(\law(W)\bigr)&=&\sum_{k\in\IZ}
\bigl\llvert \D^{n+1} F(k)+\cdots+\D^{n+1} F(k+m-1)\bigr\rrvert
\\
&\leq&\sum_{k\in\IZ}\bigl\llvert \D^{n+1} F(k)
\bigr\rrvert +\cdots +\sum_{k\in\IZ}\bigl\llvert
\D^{n+1} F(k+m-1)\bigr\rrvert
\\
&=&mD_{n,1}(F) = m D_{n,1}\bigl(\law(W)\bigr).
\end{eqnarray*}
\upqed
\end{pf}

\begin{lemma}\label{lem4} Let $n$ and $m$ be positive integers. If $W$
is a
random variable and $\mC{F}$ is a $\sigma$-algebra, then
\begin{eqnarray*}
D_{n,m}\bigl(\law(W)\bigr) \leq\IE D_{n,m} \bigl(
\law(W|\mC{F})\bigr).
\end{eqnarray*}
\end{lemma}

\begin{pf} If $f$ is a bounded function, then
\begin{eqnarray*}
\bigl\llvert \IE\D^n_m f(W)\bigr\rrvert \leq\IE\bigl
\llvert \IE\bigl[\D ^n_m f(W)|\mC{F}\bigr]\bigr
\rrvert \leq \llVert f\rrVert _\infty\IE D_{n,m} \bigl(
\law(W|\mC{F})\bigr).
\end{eqnarray*}
By Lemma~\ref{lem2}, the claim follows.
\end{pf}

\begin{lemma}\label{lem5} If $X_1$ and $X_2$ are independent random variables,
then, for all $n_1,n_2,m\geq1$,
%
\begin{equation}
\label{6} D_{n_1+n_2,m}\bigl(\law(X_1+X_2)
\bigr)\leq D_{n_1,m}\bigl(\law(X_1)\bigr)D_{n_2,m}
\bigl(\law(X_2)\bigr).
\end{equation}
If $X_1,\dots,X_N$ is a sequence of independent random
variables and $n\leq N$,
%
\begin{equation}
\label{7} D_{n,m}\bigl(\law(X_1+\cdots+X_N)
\bigr) \leq\prod_{i=1}^n D_{1,m}
\bigl(\law(X_i)\bigr).
\end{equation}
\end{lemma}

\begin{pf} Let $f$ be a bounded function and
define
\begin{eqnarray*}
g(x) := \IE\D^{n_2}_m f(x+X_2)=\sum
_{j\in\IZ} \D^{n_2}_m f(x+j)
\IP(X_2=j).
\end{eqnarray*}
Note that $\llVert  g\rrVert _\infty\leq D_{n_2,m}(X_2)\llVert  f\rrVert _\infty$
and we claim
\begin{eqnarray*}
\IE\D^{n_1+n_2}_m f(X_1+X_2) = \IE
\D^{n_1}_m g(X_1),
\end{eqnarray*}
which follows by independence (that is, the conditioning has no effect).
Hence,
\begin{eqnarray*}
D_{n_1+n_2,m}\bigl(\law(X_1+X_2)\bigr)\leq
D_{n_1,m}(X_1)\llVert g\rrVert _\infty\leq
D_{n_1,m}(X_1)D_{n_2,m}(X_2)\llVert f
\rrVert _\infty
\end{eqnarray*}
which proves \eqref{6}.
A similar argument establishes that
$D_{n,m}(X_1+X_2)\leq D_{n,m}(X_1)$ so now \eqref{7} follows by induction.
\end{pf}

The quantity $D_1(\law(W),\law(W+1))=2\dtv(W,W+1)$ has appeared in extending
the central limit theorem for sums of integer valued random variables to
stronger metrics such as the total variation and local limit metric;
see, for
example, Barbour and Xia \cite{Barbour1999}, Barbour and {\v{C}}ekanavi{\v {c}}ius \cite{Barbour2002} and Goldstein and Xia \cite
{Goldstein2006}. In
these cases, the main tool for bounding $D_1(\law(W))$ was initially the
Mineka coupling but the following result is now the best available (see
P{\'o}sfai \cite{Posfai2009} and references there).

%
\begin{lemma}[(Mattner and Roos \cite{Mattner2007}, Corollary~1.6)]\label{lem6} Let
$X_1,X_2,\ldots,X_N$ be a
sequence of independent integer valued random variables and $S_N =
\sum_{i=1}^N X_i$. Then
\begin{eqnarray*}
D_{1}\bigl(\law(S_N)\bigr) = D_{1,1}\bigl(
\law(S_N)\bigr) \leq \sqrt{\frac{8}{\uppi}} \Biggl(
\frac{1}{4} +\sum_{i=1}^N
\biggl(1-{\frac{1}{2}}D_{1}\bigl(\law(X_i)\bigr)
\biggr) \Biggr)^{-1/2}.
\end{eqnarray*}
\end{lemma}

The following two theorems are our main contributions in this section.
To illustrate their use, we apply them in a simple
setting in Proposition~\ref{prop1} at the end of this section.

%
\begin{theorem}\label{thm3}
Let $(X,X')$ be an exchangeable pair and let $W := W(X)$ and $W' := W(X')$
take
values on the integers.
Define
\begin{eqnarray*}
Q_{m}(x) = \IP\bigl[W' = W+m|X=x\bigr]
\end{eqnarray*}
and $q_m = \IE Q_m(X) = \IP[W' = W+m]$.
Then, for every positive integer $m$,
\begin{eqnarray*}
D_{1,m}\bigl(\law(W)\bigr) \leq\frac{\sqrt{\Var Q_m(X)}+\sqrt{\Var Q_{-m}(X)}}{q_m}.
\end{eqnarray*}
\end{theorem}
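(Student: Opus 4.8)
The plan is to combine the representation from Lemma~\ref{lem2} with a covariance identity extracted from exchangeability. By Lemma~\ref{lem2} with $n=1$ it is enough to show that, for every fixed $g$ with $\norm g_\infty\le1$,
\be{
  \IE\D_m^1 g(W)=\IE\bkle{g(W+m)-g(W)}\le q_m^{-1}\bklr{\sqrt{\Var Q_m(X)}+\sqrt{\Var Q_{-m}(X)}},
}
and then to take the supremum over such $g$. The only input from exchangeability is the elementary identity $\IE\phi(X,X')=\IE\phi(X',X)$, valid for any integrable $\phi$; I would apply it to $\phi(x,x')=g(W(x'))\,\mathbbm{1}[W(x')=W(x)+m]$. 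Since $\phi(X,X')=g(W+m)\mathbbm{1}[W'=W+m]$ and $\phi(X',X)=g(W)\mathbbm{1}[W'=W-m]$, this yields
\be{
  \IE\bkle{g(W+m)\mathbbm{1}[W'=W+m]}=\IE\bkle{g(W)\mathbbm{1}[W'=W-m]}.
}

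Next I would condition on $X$: since $W=W(X)$ is $\sigma(X)$-measurable, and $\IP[W'=W+m\mid X]=Q_m(X)$, $\IP[W'=W-m\mid X]=Q_{-m}(X)$, the previous display becomes $\IE[g(W+m)Q_m(X)]=\IE[g(W)Q_{-m}(X)]$. Expanding each side as a product of expectations plus a covariance, and using that exchangeability of $(W,W')$ forces $\IE Q_{-m}(X)=\IP[W'=W-m]=\IP[W'=W+m]=q_m$, this rearranges into the key identity
\be{
  q_m\,\IE\D_m^1 g(W)=\Cov\bklr{g(W),Q_{-m}(X)}-\Cov\bklr{g(W+m),Q_m(X)}.
}

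Finally I would bound the two covariances by Cauchy--Schwarz. Since $\norm g_\infty\le1$ we have $\Var g(W)\le\IE[g(W)^2]\le1$, and likewise $\Var g(W+m)\le1$, so the absolute value of the right-hand side above is at most $\sqrt{\Var Q_{-m}(X)}+\sqrt{\Var Q_m(X)}$; dividing by $q_m$ and taking the supremum over $\norm g_\infty\le1$ completes the proof. I do not expect a serious obstacle: the only real decision is which function $\phi$ of two variables to feed into the identity $\IE\phi(X,X')=\IE\phi(X',X)$, and the shape of the target bound --- two standard deviations over $q_m$ --- essentially forces one to aim for an identity of the form displayed above. The one point that needs a little care is the equality $q_m=q_{-m}$, which genuinely uses exchangeability of the pair $(W,W')$ and not merely $W\eqlaw W'$.
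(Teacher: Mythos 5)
Your proof is correct and is essentially the paper's argument: the exchangeability identity you extract from $\phi(x,x')=g(W(x'))\mathbbm{1}[W(x')=W(x)+m]$ is exactly the paper's equation~(3.3), and rewriting it as a covariance identity before applying Cauchy--Schwarz is just a cosmetic repackaging of the paper's step of subtracting $q_m^{-1}\IE\{Q_m(X)g(W+m)-Q_{-m}(X)g(W)\}=0$ from $\IE\Delta_m g(W)$ and then applying Cauchy--Schwarz termwise. (Your use of $\Var g(W)\le 1$ where the paper uses $\IE[g(W)^2]\le 1$ is an immaterial difference.)
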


\begin{pf}
To prove the first assertion, we must bound $\llvert \IE\Delta
_mg(W)\rrvert $
for all
$g$
with norm no greater than one. To this end, exchangeability implies
that for
all
bounded functions $g$
%
\begin{eqnarray}
\label{8} 0&=&q_m^{-1}\IE \bigl\{\I
\bigl[W'=W+m\bigr]g\bigl(W'\bigr)-\I
\bigl[W'=W-m\bigr]g(W) \bigr\}\nonumber
\\[-8pt]\\[-8pt]
&=&q_m^{-1}\IE \bigl\{Q_m(X)g(W+m)-Q_{-m}(X)g(W)
\bigr\} ,\nonumber
\end{eqnarray}
so that
%
\begin{eqnarray}
\label{9} && \bigl\llvert \IE\Delta_m g(W)\bigr\rrvert
\nonumber\\
&&\quad  =\bigl\llvert \IE \bigl\{ \bigl(1-q_m^{-1}Q_m(X)
\bigr)g(W+m)-\bigl(1-q_m^{-1}Q_{-m}(X)\bigr)g(W)
\bigr\} \bigr\rrvert
\\
&&\quad  \leq\frac{\sqrt{\IE \{g(W+m)^2 \} \Var Q_m(X)}+\sqrt {\IE
\{g(W)^2 \} \Var Q_{-m}(X)}}{q_m},\nonumber
\end{eqnarray}
where in the inequality we use first the triangle inequality and then
Cauchy--Schwarz. Taking the supremum over $g$ with $\llVert  g\rrVert _\infty
\leq1$ in
\eqref{9} proves the theorem.
\end{pf}

Theorem~\ref{thm3} is inspired by Stein's method of exchangeable pairs
as used by Chatterjee, Diaconis and Meckes \cite{Chatterjee2005} and
R{\"o}llin \cite{Rollin2007a}. Our next result
extends and embellishes
Theorem~\ref{thm3}.

%
\begin{theorem}\label{thm4}
Let $(X,X',X'')$ be three consecutive steps of a reversible
Markov chain in equilibrium. Let $W$ and $W'$ be as in Theorem~\ref
{thm3} and,
in addition, $W'' := W(X'')$. Define
\begin{eqnarray*}
Q_{m_1,m_2}(x) = \IP\bigl[W' = W+m_1,W''
= W'+m_2|X=x\bigr].
\end{eqnarray*}
Then, for every positive integer $m$,
\begin{eqnarray*}
D_{2,m}\bigl(\law(W)\bigr) & \leq&\frac{1}{q_m^2} \bigl(2\Var
Q_m(X)+\IE\bigl\llvert Q_{m,m}(X)-Q_m(X)^2
\bigr\rrvert
\\
&&\hphantom{\frac{1}{q_m^2} \bigl(}{} +2\Var Q_{-m}(X)+\IE\bigl\llvert Q_{-m,-m}(X)-Q_{-m}(X)^2
\bigr\rrvert \bigr).
\end{eqnarray*}
\end{theorem}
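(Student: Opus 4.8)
The plan is to mimic the proof of Theorem~\ref{thm3}, but now exploiting the reversibility of the three-step chain $(X,X',X'')$ to produce an exact identity for $\IE\D_m^2 g(W)$ in terms of the conditional probabilities $Q_m$ and $Q_{m,m}$. By Lemma~\ref{lem2}, it suffices to bound $\babs{\IE\D_m^2 g(W)}$ uniformly over $g$ with $\norm{g}_\infty\leq1$, where $\D_m^2 g(W)=g(W+2m)-2g(W+m)+g(W)$. First I would write this second difference as a telescoping combination of first differences and insert indicator identities coming from reversibility. Concretely, reversibility of $(X,X',X'')$ means the triple has the same law as $(X'',X',X)$, which gives, for bounded $h$,
\be{
  \IE\bklg{\I[W'=W+m,W''=W'+m]\,h(W,W',W'')}
    =\IE\bklg{\I[W=W'+m,W'=W''+m]\,h(W'',W',W)};
}
applied with suitable choices of $h$ this lets me replace $g(W+2m)$, $g(W+m)$ and $g(W)$ by expressions weighted by $Q_{m,m}(X)$ and $Q_m(X)$, analogously to~\eq{6}.

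The key algebraic step is to obtain, after normalising by $q_m^2$, an identity of the schematic form
\be{
  \IE\D_m^2 g(W)
   = \IE\bklg{(1-q_m^{-2}A_+(X))\,g(W+2m)}
     - \IE\bklg{(\cdots)\,g(W+m)} + \IE\bklg{(1-q_m^{-2}A_-(X))\,g(W)},
}
where $A_\pm(X)$ are built from $Q_{\pm m,\pm m}(X)$ and products $Q_{\pm m}(X)Q_{\pm m}(X)$, chosen so that the coefficients have mean zero. Once the mean-zero structure is in place, I would bound each term by $\norm{g}_\infty$ times the $L^1$-norm of the corresponding coefficient. Splitting $Q_{m,m}(X)-q_m^2 = (Q_{m,m}(X)-Q_m(X)^2)+(Q_m(X)^2-q_m^2)$ and noting $\IE(Q_m(X)^2-q_m^2)=\Var Q_m(X)$, the triangle inequality gives a contribution $\IE\babs{Q_{m,m}(X)-Q_m(X)^2}+\Var Q_m(X)$ from the "forward" part; the remaining cross term from the middle $g(W+m)$ summand contributes another $\Var Q_m(X)$, yielding the $2\Var Q_m(X)+\IE\babs{Q_{m,m}(X)-Q_m(X)^2}$ block. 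The symmetric manipulation with the chain run in the reverse direction produces the $Q_{-m}$ block, and summing the two and dividing by $q_m^2$ gives the claimed bound.

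The main obstacle is getting the bookkeeping of the telescoping identity exactly right: the second difference involves three evaluation points rather than two, so I must be careful that the reversibility substitution is applied consistently at each step and that the "mean-zero" coefficients are genuinely those whose $L^1$-norms are $\Var Q_{\pm m}$ and $\IE\babs{Q_{\pm m,\pm m}-Q_{\pm m}^2}$, with no leftover terms. A clean way to organise this is to introduce $g_1(x):=\IE[\D_m g(W')\mid X''=x]$ or a similar one-step conditional function and reduce the two-step difference to a one-step difference of $g_1$, reusing the argument of Theorem~\ref{thm3}; the conditional function then carries the second layer of $Q_m$'s. I would double-check the constant $2$ and the absolute-value versus variance split by verifying the identity in the degenerate case $W'=W+m$ deterministically (so $Q_{m,m}=Q_m=1$), where the right-hand side must vanish.
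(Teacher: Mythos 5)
Your plan is essentially the paper's proof: reversibility of the three-step chain plus the Markov property yield an exact identity for $\IE\D_m^2 g(W)$ with coefficients built from $Q_{\pm m}(X)$ and $Q_{\pm m,\pm m}(X)$ (of the form $(1-q_m^{-1}Q_{\pm m}(X))^2+q_m^{-2}(Q_{\pm m,\pm m}(X)-Q_{\pm m}(X)^2)$ on the outer summands and $-2(1-q_m^{-1}Q_m(X))(1-q_m^{-1}Q_{-m}(X))$ on the middle one), after which the bound follows by the triangle inequality and Cauchy--Schwarz. The only bookkeeping detail to nail down is that the factor $2$ on $\Var Q_{\pm m}$ comes from bounding the middle cross term via Cauchy--Schwarz and then AM--GM, not from the split of $Q_{m,m}-q_m^2$ alone, but your plan correctly anticipates this.
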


\begin{pf}
Similar to the proof of Theorem~\ref{thm3}, we want to bound
$\IE\Delta_m^2g(W)$
for all $g$ with norm no greater than one. We begin with the trivial equality
%
\begin{eqnarray}
\label{10}&& \IE \bigl\{\I\bigl[W'=W+m,W''=W'+m
\bigr]g(W+m) \bigr\}
\\
 \label{11}&&\quad  =\IE \bigl\{\I\bigl[W'=W+m,W''=W'+m
\bigr]g\bigl(W'\bigr) \bigr\} .
\end{eqnarray}
Conditioning on $X$ in \eqref{10} and on $X'$ in \eqref{11}, the Markov
property and
reversibility imply
\begin{eqnarray*}
\IE \bigl\{Q_{m,m}(X)g(W+m) \bigr\} =\IE \bigl\{ Q_m(X)Q_{-m}(X)g(W)
\bigr\} ,
\end{eqnarray*}
and similarly
\begin{eqnarray*}
\IE \bigl\{Q_{-m,-m}(X)g(W) \bigr\} = \IE \bigl\{ Q_m(X)Q_{-m}(X)g(W+m)
\bigr\} .
\end{eqnarray*}
Using these two equalities coupled with \eqref{8} we find that for
bounded $g$
\begin{eqnarray*}
0&=&\IE \bigl\{g(W+2m) \bigl(q_m^{-2}Q_{m,m}
(X)-2q_m^{-1}Q_m(X)\bigr) \bigr\}
\\
&&{} -2\IE \bigl\{ g(W+m) \bigl(q_m^{-2}Q_m(X)Q_{-m}(X)-q_m^{-1}Q_m(X)-q_m^{-1}Q_{-m}(X)
\bigr) \bigr\}
\\
&&{} +\IE \bigl\{g(W) \bigl(q_m^{-2}Q_{-m,-m}
(X)-2q_m^{-1}Q_{-m}(X)\bigr) \bigr\} .
\end{eqnarray*}
It is now not hard to see that
\begin{eqnarray*}
 \IE\Delta_m^2 g(W) &=& \IE g(W+2m) -2 \IE g(W+m) + \IE
G(W)
\\
& =&\IE \bigl\{g(W+2m) \bigl( \bigl(1-q_m^{-1}Q_m(X)
\bigr)^2+q_m^{-2} \bigl(Q_{m,m}
(X)-Q_m(X)^2 \bigr) \bigr) \bigr\}
\\
&&{} -2\IE \bigl\{g(W+m) \bigl(1-q_m^{-1}Q_m(X)
\bigr) \bigl(1-q_{m}^{-1}Q_{-m} (X) \bigr) \bigr\}
\\
&&{} +\IE \bigl\{g(W) \bigl( \bigl(1-q_m^{-1}Q_{-m}(X)
\bigr)^2+q_m^{-2} \bigl(Q_{-m,-m}
(X)-Q_ {-m}(X)^2 \bigr) \bigr) \bigr\} .
\end{eqnarray*}
The theorem now follows by taking the supremum over $g$ with norm no greater
than one and applying the triangle inequality and Cauchy--Schwarz.
\end{pf}

To better understand how Theorems \ref{thm3} and \ref{thm4} work in practice,
we derive the following result.

%
\begin{proposition}\label{prop1}
If $W\sim\Bi(n,p)$, then
\begin{eqnarray*}
D_2\bigl(\law(W)\bigr)\leq \frac{1}{n} \biggl(
\frac{2p+1}{1-p}+\frac{2(1-p)+1}{p} \biggr).
\end{eqnarray*}
\end{proposition}

\begin{pf}
Retaining the notation above, we define the following Markov chain
on sequences of zeros and ones of length $n$, reversible with respect to
the Bernoulli product measure. At each step in the chain,
a coordinate is selected uniformly at random and resampled. Let
$X,X',X''$ be
three
consecutive
steps in this chain in stationary and $W(=W(X)),W',W''$ be the number
of ones in these $0-1$ configurations. We find
\begin{eqnarray*}
Q_{1}(X)=\frac{n-W}{n} p \quad \mbox{and}\quad  Q_{-1}(X)=
\frac{W}{n}(1-p),
\end{eqnarray*}
since, for example, in order for the number of ones to increase by one from
$X$, a zero must be selected
(with probability $(n-W)/n$) and must be resampled as a one (with probability
$p$).
Similarly, we have
\begin{eqnarray*}
Q_{1,1}(X)&=&\frac{(n-W)(n-W-1)}{n^2}p^2,
\\
Q_{-1,-1}(X)&=&\frac{W(W-1)}{n^2}(1-p)^2,
\end{eqnarray*}
since in order for the number of ones to increase by one from $X$ and then
again from $X'$,
at both steps a zero must be selected (with probability
$((n-W)/n)((n-W-1)/n)$) and then
at both steps the selected coordinate must be resampled as a one (with
probability $p^2$).
Now, basic properties of the binomial distribution show
\begin{eqnarray*}
q_1&=&\IE Q_{1}(X) = p(1-p),
\\
\Var\bigl(Q_1(X)\bigr)&=&\frac{p^3(1-p)}{n},\qquad  \Var(Q_{-1}(X)=
\frac{(1-p)^3 p}{n},
\\
\IE\bigl\llvert Q_{1,1}(X)-Q_1(X)^2\bigr
\rrvert &=&\frac{p^2}{n^2}\IE (n-W)= \frac
{p^2(1-p)}{n},
\\
\IE\bigl\llvert Q_{-1,-1}(X)-Q_{-1}(X)^2\bigr
\rrvert &=&\frac
{(1-p)^2}{n^2}\IE W= \frac{(1-p)^2
p}{n},
\end{eqnarray*}
and the result follows after putting these values into Theorem~\ref
{thm4} and
simplifying.
\end{pf}

\section{Applications}\label{sec3}

Because we are going to work in the total variation and local limit
metrics, we need to use a discrete analog of the normal distribution.
We use the \emph{translated Poisson} distribution, but any distribution
such that an analog of Lemma~\ref{lem7} below holds would also work
in the examples below (for example,
any standard discretization of the normal
distribution). We say that the random variable $Z$ has the translated Poisson
distribution,
denoted $Z\sim\TP(\mu, \sigma^2)$, if $Z-{\lfloor\mu-\sigma
^2\rfloor}
\sim
\Po(\sigma^2+\gamma)$, where $\gamma= \mu- \sigma^2 - {\lfloor
\mu
-\sigma^2\rfloor}$.
Note that $\IE Z = \mu$ and $\sigma^2\leq\Var Z \leq\sigma^2+1$.
The translated Poisson distribution is a Poisson distribution shifted by
an integer to closely match a given mean and variance; see R{\"o}llin \cite{Rollin2007a}
for basic properties and applications.

The following
lemma essentially states that we can use the translated Poisson
distribution as a discrete substitute for the normal distribution and also
provides bounds on the appropriate smoothing terms.

%
\begin{lemma}\label{lem7}
If $\mu\in\IR$ and $\sigma^2>0$, then as $\sigma\to\infty$,
%
\begin{eqnarray}
\label
{12}D_{k,m} \bigl(\TP\bigl(\mu,\sigma^2\bigr) \bigr)&=&\bigo
\bigl(\sigma^{-k}\bigr),
\\
\label{13}\dk \bigl(\TP\bigl(\mu,\sigma^2\bigr), \N\bigl(\mu,
\sigma^2\bigr) \bigr)&=&\bigo\bigl(\sigma^{-1}
\bigr),
\\
\label{14}\dw \bigl(\TP\bigl(\mu,\sigma^2\bigr), \N\bigl(\mu,
\sigma^2\bigr) \bigr)&=&\bigo (1)
\end{eqnarray}
and
%
\begin{equation}
\label{15} \sup_{k\in\IZ} \biggl\llvert \TP(\mu,\sigma)\{k\} -
\frac{1}{\sqrt{2\uppi\sigma^2}}\exp \biggl(-\frac{(k-\mu
)^2}{2\sigma^2} \biggr) \biggr\rrvert = \bigo
\bigl(\sigma^{-2} \bigr).
\end{equation}
\end{lemma}

%
\begin{remark}Let us make a few clarifying remarks about
Lemma~\ref{lem7} and its use in what follows. First note that as the
proof below shows, the rates
obtained in Lemma~\ref{lem7} hold in general for sums $X_1+\cdots
+X_n$ of
independent identically distributed random variables with integer
support and
$D_1(X_1)<2$. Also, in order to appreciate the Wasserstein bound \eqref
{14}, the reader should keep in mind that both distributions in the
statement are not standardized and that, for any random variables $X$
and $Y$ and any positive constant $c$,
%
\begin{equation}
\label{225}
\dw \bigl(\law(cX),\law(cY) \bigr) = c\dw \bigl(\law(X),\law (Y) \bigr).
\end{equation}
Hence, after scaling the variables in \eqref{14} by $\sigma^{-1}$, the
rate becomes the more familiar $\bigo(\sigma^{-1})$.
Finally, the statement in \eqref{15} is just the local limit theorem
for the
translated
Poisson distribution. Such a statement is only informative if the
right-hand side of \eqref{15} is $\lito(\sigma^{-1})$, because the
left-hand
side is trivially $\bigo(\sigma^{-1})$. In this section, we will prove
bounds for $\dloc (\law(W),\TP(\mu,\sigma^2) )$, which are
better than
$\bigo(\sigma^{-1})$ and therefore, by means of \eqref{15}, will lead
to a local
limit theorem for $W$ along with a bound on the rate of convergence.
\end{remark}

%
\begin{remark} \label{rem3} Lemma~\ref{lem7} can serve as a benchmark
for the best possible rates of convergence. For sums of i.i.d. random
variables under finite third moment conditions, the
Kolmogorov and Wasserstein distances between the normalized random
variables and the standard normal distribution
are both $\bigo(\sigma^{-1})$, which can be improved only under
additional assumptions (such as symmetry) of the involved
distributions. Furthermore, if the summands
are integer valued and smooth enough, then the local metric distance to
a discrete analog of the normal distribution has rate $\bigo(\sigma
^{-2})$ .

As indicated in the \hyperref[secintro]{Introduction}, our method will typically not yield
rates of convergence that are comparable to Lemma~\ref{lem7} and those
for sums of i.i.d. random variables.
So in applications where it is expected the rates should be the same as
those for sums of i.i.d. random variables
(e.g., magnetization in the Curie--Weiss model at high temperature),
our results are likely not optimal.
However, in particular for the local limit metric, for which only few
results with explicit rates of
convergence are known, it is not clear whether one can expect the same
rates as those for sums of i.i.d. variables,
and so we leave the question of optimality open.
\end{remark}

\begin{pf*}{Proof of Lemma~\ref{lem7}}
First, note that since $D_{n,m}(\law(X))=D_{n,m}(\law(X+l))$ for all integers
$l$, it is enough to prove \eqref{12} with the translated Poisson distribution
replaced by $\Po(\sigma^2+\gamma)$. We can represent this Poisson
distribution
as the convolution of $k$ independent Poisson distributions all having mean
$(\sigma^2+\gamma)/k$ and so by Lemmas \ref{lem3} and \ref{lem5} we
find that
for $X\sim\Po((\sigma^2+\gamma)/k)$,
%
\begin{equation}
D_{k,m} \bigl(\TP\bigl(\mu,\sigma^2\bigr) \bigr)\leq m
D_1\bigl(\law(X)\bigr)^k. \label{16}
\end{equation}
We can represent $X$ as the sum of ${\lfloor(\sigma^2+\gamma
)/k\rfloor}$
(here assume
$\sigma^2>k$) i.i.d. Poisson variables with means $\lambda_{\sigma
,k}\geq1$.
Lemma~\ref{lem6} now implies that if $Y_{\sigma,k}\sim\Po(\lambda
_{\sigma,k})$
and $D_1(\law(Y_{\sigma,k}))<2-\eps$ for some $\eps>0$ and all
$\sigma$ sufficiently
large, then
\begin{eqnarray*}
D_1\bigl(\law(X)\bigr) = \bigo\bigl(\sigma^{-1}\bigr),
\end{eqnarray*}
which with \eqref{16} yields \eqref{12}.
But it is well known (and easily checked) that for $W\sim\Po(\lambda)$
and any bounded function $g$, $\lambda\IE g(W+1)=\IE \{
Wg(W) \} $
and so
\begin{eqnarray*}
D_1\bigl(\law(W)\bigr)&=&\sup_{\llVert  g\rrVert _\infty\leq1}\bigl\llvert
\IE g(W+1)-\IE g(W)\bigr\rrvert
\\
&=&\frac{1}{\lambda} \sup_{\llVert  g\rrVert _\infty\leq1}\bigl\llvert \IE \bigl\{(W-
\lambda) g(W) \bigr\} \bigr\rrvert \leq\frac{1}{\sqrt{\lambda}},
\end{eqnarray*}
where the last inequality follows by Cauchy--Schwarz (using Fourier
methods, Barbour, Holst and Janson \cite{Barbour1992}, Proposition A.2.7, in
fact show that
$D_1(\law(W)) \leq2/{\sqrt{2e\lambda}}$). Thus, it is indeed true that
$D_1(Y_{\sigma,k})\leq1 < 2-\eps$ and \eqref{12} is proved.

The remaining properties follow by representing $\Po(\sigma^2+\gamma
)$ as a
sum of ${\lfloor\sigma^2\rfloor}$ i.i.d. Poisson random variables
and using well
known theory about sums of independent random variables: \eqref{13} and
\eqref{15}
are respectively Theorem~4 on page 111 and Theorem~6 on page 197
of Petrov \cite{Petrov1975} and \eqref{14} is
Corollary~4.2 on page 68
of Chen, Goldstein and Shao \cite{Chen2011}.
\end{pf*}

\subsection{Magnetization in the Curie--Weiss model}\label{sec4}

Let $\beta>0$, $h\in\IR$ and for $\mysigma\in\{ -1,1 \}^n$ define
the Gibbs
measure
%
\begin{equation}
\IP( \mysigma) = Z^{-1} \exp \biggl\{ \frac{\beta}{n} \sum
_{i < j} \mysigma_i \mysigma_j + h
\sum_i \mysigma_i \biggr\} ,
\label{17}
\end{equation}
where $Z$ is the appropriate normalizing constant (we use the letter
``$\mysigma$'' instead of the more commonly used ``$\sigma$'' in order to
avoid confusion with the notation for variance).

This probability model is referred to as the Curie--Weiss model and a
quantity of
interest is the magnetization $W=\sum_{i}\mysigma_i$ of the system. The
book Ellis \cite{Ellis1985}
provides a good introduction
to these models.
We use our framework to show total variation and local limit theorems
(LLTs) with bounds on the rates of convergence;
these are
stated in Theorem~\ref{thm7} below.
We start by stating known limit and approximation results for the
Kolmogorov metric, which we will need for our approach.

%
\begin{theorem}[{(Ellis, Newman and Rosen \cite{Ellis1980}, Theorem~2.2)}]\label{thm5}
If $\mysigma$ has law given by \eqref{17} with $0<\beta<1$, and
$h\in
\IR$
and $W=\sum_{i}\mysigma_i$,
then there is a unique solution $m_0$
of
\begin{eqnarray*}
m = \tanh(\beta m + h)
\end{eqnarray*}
and as $n\to\infty$,
\begin{eqnarray*}
\dk \biggl(\law \biggl({\frac{W-n m_0}{n^{1/2}}} \biggr), \N \biggl(0,{
\frac{1-m_0^2}{1-\beta+\beta m_0^2}} \biggr) \biggr)\to0.
\end{eqnarray*}
\end{theorem}

%
\begin{theorem}[({Eichelsbacher and L{\" o}we \cite{Eichelsbacher2010}, Theorems 3.3 and
3.7})]\label{thm6}
If $\mysigma$ has law given by \eqref{17} with $0<\beta<1$, and $h=0$
and $W=\sum_{i}\mysigma_i$,
then there is a constant $C$ depending only on $\beta$ such that
\begin{eqnarray*}
\dk \bigl(\law\bigl(n^{-1/2}W\bigr), \N\bigl(0,(1-\beta)^{-1}
\bigr) \bigr)\leq C n^{-1/2};
\end{eqnarray*}
the same bound holds for the Wasserstein metric.
\end{theorem}

Note that Chen, Fang and Shao \cite{Chen2012} have obtained
moderate deviation results,
which are much sharper than the Berry--Esseen type bounds of Theorem~\ref{thm6}.

The other ingredient of applying our framework here is to use Theorem~\ref{thm4}
to bound the necessary smoothing terms.
For this purpose,
let ${\mysigma}$ as
above and ${\mysigma}'$ be a step from ${\mysigma}$ in the
following reversible Markov chain: at each step of the chain a site
from the $n$ possible sites is chosen uniformly at random and then the
spin at that site is resampled according to the Gibbs measure \eqref{17}
conditional on the value of the spins
at all other sites. Let $W=\sum_{i=1}^n \mysigma_i$ and
$W'=\sum_{i=1}^n\mysigma_i'$
and note that $(W,W')$ is an exchangeable pair. Finally,
define
\begin{eqnarray*}
Q_{m} = \IP\bigl[W' = W+m|{\mysigma}\bigr],
\end{eqnarray*}
$q_m=\IE Q_m$, and
\begin{eqnarray*}
Q_{m_1,m_2}= \IP\bigl[W' = W+m_1,W''
= W'+m_2|{\mysigma}\bigr],
\end{eqnarray*}
where $W''$ is obtained from $W'$ in the same way that $W'$ is obtained from
$W$
(i.e., $(W,W',W'')$ are the magnetizations in three consecutive steps
in the
stationary
Markov chain described above).
We have the following result, proved at the end of this section.

%
\begin{lemma}\label{lem8}
If $0<\beta<1$, $h\in\IR$ and $M=\frac{1}{n}\sum_{i=1}^n \mysigma
_i$, then there
is a unique
solution
$m_0$ to
\begin{eqnarray*}
m = \tanh(\beta m + h),
\end{eqnarray*}
and for $k=\pm2$,
%
\begin{eqnarray}
\label{19} \biggl\llvert Q_k-\frac{1-m_0^2}{4}\biggr\rrvert
&\leq& C \biggl(\llvert M-m_0\rrvert +\frac{1}{n} \biggr),
\\
\label{20} \bigl\llvert Q_{k,k}-Q_k^2\bigr
\rrvert &=& \bigo\bigl(n^{-1}\bigr),
%
\\
\label{21} \biggl\llvert q_k-\frac{1-m_0^2}{4}\biggr\rrvert &=&
\bigo\bigl(n^{-1/2}\bigr), \qquad \Var(Q_k)=\bigo
\bigl(n^{-1}\bigr)
\end{eqnarray}
and
%
\begin{equation}
\label{22} D_{2,2}(W)=\bigo\bigl(n^{-1}\bigr).
\end{equation}
\end{lemma}

We can now put these pieces together to
obtain total variation and local limit convergence theorems with bounds
on the rates for
the magnetization.

%
\begin{theorem}\label{thm7} Let
$\mysigma$ have law given by \eqref{17}, $W=\sum_{i}\mysigma_i$,
and let $\delta=\delta(n)=(1-(-1)^n)/2$.
For $0<\beta<1$ and $h=0$, there is a constant $C$ that depends
only
on $\beta$ such that
\begin{eqnarray*}
\dloc \biggl(\law \bigl((W+\delta)/2 \bigr),\TP \biggl(0,{\frac
{n}{4(1-\beta)}}
\biggr) \biggr)&\leq& C n^{-3/4},
\\
\dtv \biggl(\law \bigl((W+\delta)/2 \bigr),\TP \biggl(0,{\frac
{n}{4(1-\beta)}}
\biggr) \biggr)&\leq& C n^{-1/3}.
\end{eqnarray*}
If $0<\beta<1$, $h\in\IR$, and $m_0$ is as in Theorem~\ref{thm5}, then
\begin{eqnarray*}
\dloc \biggl(\law \biggl({\frac{W+\delta}{2}} \biggr), \TP \biggl({
\frac{m_0}{2}},{\frac{n(1-m_0^2)}{4(1-\beta+\beta
m_0^2)}} \biggr) \biggr) = \lito
\bigl(n^{-1/2} \bigr)
\end{eqnarray*}
as $n\toinf$.
\end{theorem}

\begin{pf}
The theorem follows from (ii), (iii) and (iv) of Theorem~\ref{thm2}
with $m=1$ and $l=2$,
Lemma~\ref{lem7}, Theorems \ref{thm6} and \ref{thm5}, and the bounds
on the
smoothing terms in Lemma~\ref{lem8}.
\end{pf}

\begin{remark}
In the critical case where $\beta=1$, optimal bounds on the Kolmogorov
and Wasserstein
distances between the magnetization (appropriately normalized) and its
non-normal limiting distribution have been obtained by Eichelsbacher and L{\" o}we
\cite{Eichelsbacher2010}, Theorem~3.3, and Chatterjee and Shao \cite{Chatterjee2011a}, Theorem~2.1. In
fact, the smoothing bounds of Lemma~\ref{lem8} can be shown to apply
to this
case with $h=0$ and an appropriate analog of Lemma~\ref{lem7} also
holds for a
discretization of the non-normal limiting distribution. And these two facts
can be used to prove new bounds on the total variation distance between the
magnetization and a discrete version of this limiting distribution (although
after working out the details, we are not able to obtain meaningful
local metric results). However, we omit this result due to the inappropriate
amount of space it would take for a precise formulation.
\end{remark}

\begin{pf*}{Proof of Lemma~\ref{lem8}}
We only consider $k=2$, the case
$k=-2$ being similar.
An easy calculation shows that
\begin{eqnarray*}
\IP\bigl(s'_i = 1 | (s_j)_{j \neq i}
\bigr) = \frac{ \exp
\{ \sklfrac{\beta}{n} \sum_{j \neq i} s_j + h  \} } {
\exp \{ \sklfrac{\beta}{n} \sum_{j \neq i} s_j + h  \}
+ \exp \{ -\sklfrac{\beta}{n} \sum_{j \neq i} s_j - h  \}
}.
\end{eqnarray*}
Denoting $m_i:=n^{-1}\sum_{j\neq i}\mysigma_i$, we have
\begin{eqnarray*}
Q_2=\frac{1}{n}\sum_{i=1}^n
\frac{1-\mysigma_i}{2} \IP\bigl(s'_i = 1 |
(s_j)_{j \neq i} \bigr) =\frac{1}{n}\sum
_{i=1}^n\frac{1-\mysigma_i}{2} \frac{\tanh(\beta m_i+ h)+1}{2},
\end{eqnarray*}
since in order for the Markov chain to increase by two, a site in state ``$-1$''
must be selected and then changed to ``$+1$''.
Now, some simplification shows
\begin{eqnarray*}
Q_2 &=&\frac{1}{4}-\frac{M}{4}+\frac{\tanh(\beta M + h)}{4}(1-M)
\\
&&{} + \frac{1}{4n}\sum_{i=1}^n(1-
\mysigma_i) \bigl(\tanh(\beta m_i +h)-\tanh(\beta M + h)
\bigr).
\end{eqnarray*}
Thus, we find
\begin{eqnarray*}
\biggl\llvert Q_2-\frac{1-m_0^2}{4}\biggr\rrvert &\leq&
\frac{1}{4}\llvert M-m_0\rrvert +\frac{1}{4}\bigl\llvert
\tanh(\beta m_0+h)-\tanh (\beta M + h)\bigr\rrvert
\\
&&{} +\frac{1}{4}\bigl\llvert M\tanh(\beta M + h)-m_0\tanh(\beta
m_0+h)\bigr\rrvert
\\
&&{} +\frac{1}{4n}\sum_{i=1}^n(1-
\mysigma_i)\bigl\llvert \tanh(\beta m_i+h)-\tanh(\beta
M+h)\bigr\rrvert .
\end{eqnarray*}
Since $\tanh(x)\in(-1,1)$ is $1$-Lipschitz and $-1\leq M\leq1$ the
first part
of the
claim now easily follows.

For the second assertion, note that
\begin{eqnarray*}
Q_{2,2} & =& \frac{1}{16 n^2}\sum_{i\neq j}
(1-\mysigma_i) \bigl(\tanh(\beta m_i+h)+1\bigr)
\\
&&\hphantom{\frac{1}{16 n^2}\sum_{i\neq j}}{}\times (1-\mysigma_j) \bigl(\tanh\bigl(\beta
m_{i,j}+\beta n^{-1}+h\bigr)+1\bigr),
\end{eqnarray*}
where $m_{i,j}=\beta n^{-1}\sum_{k\neq i,j}\mysigma_k$, and also that
\begin{eqnarray*}
Q_2^2  = \frac{1}{16n^2}\sum
_{i,j}(1-\mysigma_i) \bigl(\tanh(\beta
m_i+h)+1\bigr) (1-\mysigma_j) \bigl(\tanh(\beta
m_j+h)+1\bigr).
\end{eqnarray*}
We can now find
\begin{eqnarray*}
\bigl\llvert Q_{2,2}-Q_2^2\bigr\rrvert & \leq&
\frac{1}{8n^2}\sum_{i}(1-\mysigma_i)
\bigl(\tanh(\beta m_i + h)+1\bigr)^2
\\
&&{} + \frac{1}{8n^2}\sum_{i\neq j}(1-
\mysigma_i) (1-\mysigma_j) \bigl\llvert \tanh(\beta
m_i + h)+1\bigr\rrvert
\\
&&\hphantom{{} + \frac{1}{8n^2}\sum_{i\neq j}}{}\times\bigl\llvert \tanh\bigl(\beta m_{i,j}+\beta
n^{-1} + h\bigr)-\tanh (\beta m_j + h)\bigr\rrvert .
\end{eqnarray*}
Straightforward estimates now yield \eqref{20}.

The assertions of \eqref{21} follow from
\eqref{19} and the fact $\IE\llvert  M-m_0\rrvert ^j =
\bigo (n^{-j/2} )$ which is obtained
from standard concentration results; see, for example, Chatterjee
\cite{Chatterjee2007}, Proposition~1.3. Finally, \eqref{22} follows from \eqref{19},
\eqref
{20}, and \eqref{21}
applied to Theorem~\ref{thm4}.
\end{pf*}

\subsection{Isolated vertices in the \texorpdfstring{Erd\H{o}s--R\'enyi
}{Erdos-Renyi} random
graph}\label{sec5}

In this and the next section, we will derive LLTs for the number of isolated
vertices and triangles in the Erd\H{o}s--R\'enyi random graph. There do
not appear to be
many results showing LLTs for random graph variables (and even fewer
having error bounds) although one area that has seen activity is
showing LLTs
for the size of the maximal component in graphs and hypergraphs; see
Stepanov \cite{Stepanov1970}, Karo{\'n}ski and {\L }uczak \cite{Karonski2002} and Behrisch, Coja-Oghlan and Kang
\cite{Behrisch2010a}. An
alternative approach to proving an LLT for the number of isolated
vertices in
an Erd\H{o}s--R\'enyi graph which we do not believe has been pursued
would be to use the
results of Bender, Canfield and McKay \cite{Bender1997} which have
detailed formulas for the
number of
graphs with a given number of vertices and edges and no isolated
vertices. To
the best of our knowledge, the following results on the number of isolated
vertices and number of triangles are new.

Before proceeding, we make a remark to prepare the dedicated reader for
the proofs below.
Many proofs of limit theorems for random graph variables
involve tedious moment calculations. For example, the limit results we
use below in our framework: Ruci{\'n}ski \cite
{Rucinski1988} uses the method of moments
to derive conditions where the number of copies of a ``small'' subgraph
in an Erd\H{o}s--R\'enyi graph will be approximately normally
distributed, and Barbour, Karo{\'n}ski and Ruci{\' n}ski \cite{Barbour1989}
uses a variation of Stein's method which in turn relies on moment estimates
to show limit theorems for the number of copies of certain subgraphs
in an Erd\H{o}s--R\'enyi graph; see also the references in these
documents. Since our
theory relies on
bounding means and variances of conditional probabilities, our work
below continues this tradition.

We use our framework to show total variation and local limit theorems
with bounds on the rates
for the number of isolated vertices; this is
Theorem~\ref{thm9} below.
We start by stating known limit and approximation results.
Define $G=G(n,p)$ to be a
random graph with $n$ vertices where each edge appears with probability $p$,
independent of all other edges.

%
\begin{theorem}[(Barbour, Karo{\'n}ski and Ruci{\' n}ski \cite{Barbour1989}, Kordecki \cite{Kordecki1990})]\label{thm8}
Let $W=W(n,p)$ be the number of isolated vertices of $G(n,p)$, and let
$\tilde{W}$ be $W$ normalized to have zero mean and unit variance.
Then $\ww{W}$
converges in distribution to the standard normal if and only if
%
\begin{equation}
\label{23} \lim_{n\to\infty}n^2p=\infty\quad \mbox{and}\quad
\lim_{n\to\infty}\bigl(\log (n)-np\bigr)=\infty.
\end{equation}
In that case, with $\sigma^2_n=\Var W$,
\begin{eqnarray*}
\dk\bigl(\law(\tilde{W}),\Phi\bigr) = \bigo\bigl(\sigma_n^{-1}
\bigr).
\end{eqnarray*}
\end{theorem}

The conditions of convergence was proved by Barbour, Karo{\'n}ski and Ruci{\' n}ski
\cite{Barbour1989},
whereas the
bounds for the Kolmogorov metric was obtained by Kordecki \cite{Kordecki1990}.

The other ingredient of applying our framework here is to use Theorem~\ref{thm4}
to bound the necessary smoothing terms.
We have the following result, proved at the end of this section.

%
\begin{lemma}\label{lem9}
Let $W=W(n,p)$ be the number of isolated vertices in an Erd\H{o}s--R\'
enyi graph $G(n,p)$
and $\sigma_n^2=\Var W$.
\begin{enumerate}[(ii)]
\item[(i)] If $\lim_{n\to\infty}(\log(n)-np)=\infty$, and either
$\lim_{n\to\infty}np=\infty$ or $\lim_{n\to\infty}np=c>0$,
then $\sigma_n^2\asymp n\mathrm{e}^{-np}$ and
\begin{eqnarray*}
D_1(W)= \bigo \bigl(\sigma_n^{-1} \bigr),\qquad
D_2(W)= \bigo \bigl(\sigma_n^{-2} \bigr).
\end{eqnarray*}
\item[(ii)] If $\lim_{n\to\infty}np=0$ and
$\lim_{n\to\infty}n^2p=\infty$, then
$\sigma_n^2\asymp n^2 p$ and
\begin{eqnarray*}
\begin{array}{rcl@{\qquad}rcl}
D_1(W)&=& \bigo \bigl( (\sqrt{np}\sigma_n
)^{-1} \bigr), &D_2(W)&=& \bigo \bigl((np)^{-1}
\sigma_n^{-2} \bigr), \vspace{3pt}
\\
D_{1,2}(W)&=&\bigo \bigl(\sigma_n^{-1} \bigr),&
D_{2,2}(W)&=& \bigo \bigl(\sigma_n^{-2} \bigr).
\end{array}
\end{eqnarray*}
\end{enumerate}
\end{lemma}

We now summarize the results of our framework combined with
Theorem~\ref{thm8}
and Lemma~\ref{lem9}. For two distribution functions $F$ and
$G$ with integer
support, let
\begin{eqnarray*}
\dloc^{m}(F,G) = \bigl\llVert \D \bar{F}^m - \D \bar{G}^m\bigr\rrVert
_\infty.
\end{eqnarray*}
Note that $\dloc= \dloc^{1}$ and recall also the equality given by
Lemma~\ref{lem1}.

%
\begin{theorem}\label{thm9}
Let $W=W(n,p)$ be the number of isolated vertices in an Erd\H{o}s--R\'
enyi random graph
$G(n,p)$ and $\tilde{W}$ be $W$ normalized to have zero mean and unit
variance.
With $\mu_n=\IE W$ and $\sigma_n^2=\Var(W)$, we have the following.
\begin{enumerate}[(ii)]
\item[(i)] If $\lim_{n\to\infty}(\log(n)-np)=\infty$, and either
$\lim_{n\to\infty}np=\infty$ or $\lim_{n\to\infty}np=c>0$,
%
\begin{eqnarray*}
\dloc \bigl(\law(W),\TP\bigl(\mu_n,\sigma_n^2
\bigr) \bigr) = \bigo \bigl(\sigma_n^{-3/2} \bigr).
\end{eqnarray*}
\item[(ii)] If $\lim_{n\to\infty}np=0$ and
$\lim_{n\to\infty}n^2p=\infty$, then
%
\begin{eqnarray*}
\dloc \bigl(\law(W),\TP\bigl(\mu_n,\sigma_n^2
\bigr) \bigr) &=&\bigo \bigl(\sigma_n^{-1}
\bigl(np^{3/4} \bigr)^{-1} \bigr),
\\
\dloc^{2} \bigl(\law(W),\TP\bigl(\mu_n,\sigma_n^2
\bigr) \bigr) &=&\bigo \bigl(\sigma_n^{-3/2} \bigr).
\end{eqnarray*}
\end{enumerate}
\end{theorem}

\begin{pf}
The result follows from (ii) of Theorem~\ref{thm2} with $l=2$, using the
known
rates stated above
in Theorem~\ref{thm8} coupled with Lemma~\ref{lem7}, and bounds on
the smoothing
quantities provided by
Lemma~\ref{lem9}.
\end{pf}

\begin{remark}\label{rem1}
The bounds on the smoothing quantities presented below can be written
in terms
of $n$ and $p$, so that the asymptotic results of the theorem can be written
explicitly whenever the bounds on $\dk(\tilde{W},\Phi)$ are also explicit.
\end{remark}

%
\begin{remark}\label{rem2}
The second case of the theorem is interesting and deserves elaboration.
In some
regimes, our bounds do not imply a LLT in the natural lattice of span
one (e.g.,
$p\asymp n^{-\alpha}$, where $\alpha>4/3$), but we can obtain a
useful bound
on the rate
of convergence in
the $\dloc^{2}$ distance. This implies that the approximation is better
by averaging the probability mass function of $W$ over two neighboring
integers
and then comparing this value to its analog for the normal density. One
explanation for this phenomenon is that in such a regime, the graph $G(n,p)$
will be extremely sparse so that parity of $W$ will be dominated by the number
of isolated edges. In other words, with some
significant probability, $n-W$ will be approximately equal to twice the number
of isolated edges, in
which case
we would not expect the normal density to be a good approximation for
each point
on the integer lattice.
\end{remark}

\begin{pf*}{Proof of Lemma~\ref{lem9}}
The stated order of the variance follows from
\begin{eqnarray*}
\sigma_n^2=n(1-p)^{n-1} \bigl[1+(np-1)
(1-p)^{n-2} \bigr],
\end{eqnarray*}
which follows easily after representing $W$ as a sum of indicators;
see also the moment information given below.

To prove the bounds on the smoothing terms, we apply Theorems \ref
{thm3} and \ref{thm4}.
For this purpose, let $G(n,p)$ as above and $G'(n,p)$ be a step
from $G(n,p)$ in the following reversible Markov chain:
from a given graph $G$ the chain moves to $G'$ by
choosing
two vertices uniformly
at random and resampling the ``edge'' between them.
Let $W=W(n,p)$ be the number of isolated vertices of the Erd\H{o}s--R\'
enyi graph
$G=G(n,p)$ and  $W'$ be the number of isolated vertices after one
step in
the chain
from $G$, so $(W,W')$ is an exchangeable pair.
Finally, define
\begin{eqnarray*}
Q_{m} = \IP\bigl[W' = W+m|{\sigma}\bigr],
\end{eqnarray*}
$q_m=\IE Q_m$, and
\begin{eqnarray*}
Q_{m_1,m_2}= \IP\bigl[W' = W+m_1,W''
= W'+m_2|{\sigma}\bigr],
\end{eqnarray*}
where $W''$ is obtained from $W'$ in the same way that $W'$ is obtained from
$W$
(i.e. $(W,W',W'')$ are the magnetizations in three consecutive steps in the
stationary
Markov chain described above).

In order to compute the terms needed to apply
Theorems \ref{thm3} and \ref{thm4}, we need some auxiliary random variables.
Let
$W_k$ be the number of vertices of degree $k$ in $G$ (so $W_0\equiv W$),
and $E_2$ be the number of connected pairs of vertices each having
degree one
(i.e., $E_2$ is the number of isolated edges).
We have
\begin{eqnarray*}
\begin{array}{rcl@{\qquad}rcl}
Q_1(G)&=&\displaystyle \frac{(W_1-2E_2)}{\binom{n}{2}}(1-p), &Q_{-1}(G)&=&
\displaystyle \frac{W(n-W)}{\binom{n}{2}}p,
\\\noalign{\vspace*{2pt}}
Q_{1,1}(G)&=&\displaystyle \frac{2\binom{W_1-2E_{2}}{2}}{\binom{n}{2}^2}(1-p)^2,&
Q_{-1,-1}(G)&=&\displaystyle \frac{4\binom{W}{2}\binom{n-W+1}{2}}{\binom
{n}{2}^2}p^2,
\\\noalign{\vspace*{2pt}}
Q_2(G)&=&\displaystyle \frac{E_2}{\binom{n}{2}}(1-p),& Q_{-2}(G)&=&
\displaystyle \frac{\binom{W}{2}}{\binom{n}{2}} p,
\\\noalign{\vspace*{2pt}}
Q_{2,2}(G)&=&\displaystyle \frac{2\binom{E_2}{2}}{\binom{n}{2}^2}(1-p)^2,&
Q_{-2,-2}(G)&=&\displaystyle \frac{\binom{W}{2}\binom{W-2}{2}}{\binom{n}{2}^2}p^2.
\end{array}
\end{eqnarray*}
These equalities are obtained through straightforward considerations. For
example, in order for one step in the chain to increase the number of isolated
vertices by one, an edge of $G$ must be chosen that has exactly one end vertex
of degree one, and then must be removed upon resampling. For one step
in the
chain to decrease the number of isolated vertices by one, an isolated vertex
must be connected to a vertex with positive degree.

From this point, the lemma will follow after computing the pertinent moment
information needed to apply Theorems \ref{thm3} and \ref{thm4}. By
considering
appropriate indicator functions, it is an elementary combinatorial
exercise to
obtain
\begin{eqnarray*}
\IE W_1&=&2\binom{n} {2}p(1-p)^{n-2},\qquad  \IE E_2=
\binom{n} {2}p(1-p)^{2n-4},
\\
\IE W_1^2&=&2\binom{n} {2}p(1-p)^{n-2}+2p
\binom{n} {2} \bigl[(1-p)^{2n-4}+p(n-2)^2(1-p)^{ 2n-5}
\bigr],
\\
\IE E_2^2&=&\binom{n} {2}p(1-p)^{2n-4}+6\binom{n}
{4}p^2(1-p)^{4n-12},
\\
\IE W_1E_2&=&\binom{n} {2}p(1-p)^{2n-4}
\bigl[(n-2) (n-3)p(1-p)^{n-4}+2 \bigr],
\end{eqnarray*}
which will yield the results for negative jumps, and
\begin{eqnarray*}
\IE W&=&n(1-p)^{n-1},\qquad  \IE W^2=n(1-p)^{n-1}+2
\binom{n} {2}(1-p)^{2n-3},
\\
\IE W^3&=&n(1-p)^{n-1}+6\binom{n} {2}(1-p)^{2n-3}+6
\binom {n} {3}(1-p)^{3n-6},
\\
\IE W^4&=&n(1-p)^{n-1}+14\binom{n} {2}(1-p)^{2n-3}
\\
&&{} +36\binom{n} {3}(1-p)^{3n-6}+24\binom{n} {4}(1-p)^{4n-10},
\end{eqnarray*}
which will yield the results for the positive jumps. Theorems \ref{thm3}
and \ref{thm4} now give the desired rates. As an example of these
calculations,
note that
%
\begin{equation}\label{24}
\frac{\Var Q_1(G)}{q_1^2} =\frac{\IE W_1^2-4\IE W_1E_2+4\IE
E_2^2-(\IE
W_1-2\IE E_2)^2}{(\IE W_1-2\IE E_2)^2},
\end{equation}
which after the dust settles is $\bigo (n^{-1}\mathrm{e}^{np} )$ in
case (i)
of the theorem. Similarly, since $q_{1}=\IE Q_{-1}$, we have
%
\begin{equation}\label{25}
\frac{\Var Q_{-1}(G)}{q_{1}^2}=\frac{\IE W^4 -2n\IE W^3 + n^2\IE W^2
-(n
\IE W- \IE W^2)^2}{(n \IE W- \IE W^2)^2},
\end{equation}
which is again $\bigo (n^{-1}\mathrm{e}^{np} )$ in case (i).
Theorem~\ref{thm3} implies that $D_1(W)$ is bounded above
by the sum of the square roots of the terms in \eqref{24} and \eqref
{25} so
that in case (i),
\begin{eqnarray*}
D_1(W) = \bigo \bigl(n^{-1/2}\mathrm{e}^{\vfrac{np}{2}} \bigr) = \bigo
\bigl(\sigma _n^{-1} \bigr).
\end{eqnarray*}
For the second part of (i), note that
\begin{eqnarray*}
\frac{\IE\llvert  Q_{1,1}-Q_1^2\rrvert }{q_1^2}\leq\frac
{\IE W_1+2\IE
E_2}{(\IE
W_1-2\IE E_2)^2},
\end{eqnarray*}
which is $\bigo (n^{-2}p^{-1}\mathrm{e}^{np} )$ in case (i) and
\begin{eqnarray*}
\frac{\IE\llvert  Q_{-1,-1}-Q_{-1}^2\rrvert }{q_1^2}=\frac
{\IE
(W(n-W)\llvert  n-2W+1\rrvert  )}{(n \IE W- \IE
W^2)^2}\leq\frac{n+1}{ n \IE W- \IE W^2},
\end{eqnarray*}
which is $\bigo (n^{-1}\mathrm{e}^{np} )$ in case (i),
so that we have
\begin{eqnarray*}
D_2(W) = \bigo \bigl(n^{-1}\mathrm{e}^{np} \bigr) = \bigo
\bigl(\sigma _n^{-2} \bigr).
\end{eqnarray*}
This proves (i); the remaining bounds are similar and
omitted for the sake of brevity.
\end{pf*}

\subsection{Triangles in the \texorpdfstring{Erd\H{o}s--R\'
enyi}{Erdos-Renyi} random
graph}\label{sec6}

In this section, we use our framework to
first obtain a new bound on the rate of convergence in the total
variation distance between
the normal distribution and
the number of triangles in an Erd\H{o}s--R\'enyi random graph.
We then use this
new rate to obtain a local
limit theorem for this example. As in
Section~\ref{sec5}, define $G=G(n,p)$ to be a random graph with $n$
vertices
where each edge appears with probability $p$, independent of all other edges.
From this point, we have the following theorem.

%
\begin{theorem}[(Ruci{\'n}ski \cite{Rucinski1988},
Barbour, Karo{\'n}ski and Ruci{\' n}ski \cite{Barbour1989})]\label{thm10}
Let $W=W(n,p)$ be the number of triangles of $G(n,p)$, and let $\tilde
{W}$ be
$W$ normalized to have zero mean and unit variance. Then $\ww{W}$
converges to
the
standard normal if and only if
\begin{eqnarray*}
\lim_{n\to\infty}np=\infty\quad \mbox{and} \quad \lim_{n\to\infty
}n^2(1-p)=
\infty.
\end{eqnarray*}
In that case, with $\sigma^2_n = \Var W$,
\begin{eqnarray*}
\dw\bigl(\law(\tilde{W}),\Phi\bigr) = \bigo\bigl(\sigma_n^{-1}
\bigr).
\end{eqnarray*}
\end{theorem}

The other ingredient of applying our framework here is to use Theorem~\ref{thm4}
to bound the necessary smoothing terms.
We have the following result, proved at the end of this section.

%
\begin{lemma}\label{lem10}
Let $W=W(n,p)$ be the number of triangles in an Erd\H{o}s--R\'
enyi random graph
$G(n,p)$. If
$n^{\alpha}p\to c>0$ with $1/2\leq\alpha<1$ then $\Var(W)\asymp n^3
p^3$ and
%
\begin{equation}
\label{26} D_1(W)= \bigo \bigl(\sigma_n^{-1}
\bigr),\qquad  D_{2}(W)=\bigo \bigl(\sigma_n^{-2}
\bigr).
\end{equation}
\end{lemma}

We now summarize the results derived from the bound of Theorem~\ref{thm10}
coupled with our theory above.

%
\begin{theorem}\label{thm11}
Let $W=W(n,p)$ be the number of triangles in an Erd\H{o}s--R\'
enyi random graph
$G(n,p)$. If
$n^{\alpha}p\to c>0$ with $1/2\leq\alpha<1$ then with
$\mu_n=\IE W$ and $\sigma_n^2:=\Var(W)$, we have
\begin{eqnarray*}
\dtv \bigl(\law(W),\TP\bigl(\mu_n,\sigma^2_n
\bigr) \bigr) =\bigo \bigl(n^{-(1-\alpha)} \bigr)
\end{eqnarray*}
and
\begin{eqnarray*}
\dloc \bigl(\law(W),\TP\bigl(\mu_n,\sigma^2_n
\bigr) \bigr) =\bigo \bigl(\sigma_n^{-1}n^{-(1-\alpha)/2}
\bigr).
\end{eqnarray*}
\end{theorem}

\begin{pf}
The result follows from (iv) and then (i) (or (iii)) of
Theorem~\ref{thm2}
with $l=2$ and $m=1$,
using the known rates stated above in
Theorem~\ref{thm10} coupled with Lemma~\ref{lem7} and bounds on the smoothing
quantities provided by
Lemma~\ref{lem10}.
\end{pf}

\begin{remark} It is worthwhile noting that we obtain the LLT only for
those values of $\alpha$ for which we have $\IE W \asymp\Var W$. In contrast,
if $0<\alpha<1/2$, we have that $\IE W\asymp n^{3-3\alpha}$, whereas
$\Var W
\asymp
n^{4-5\alpha} \gg\IE W$. It is not clear if this is an artifact of
our method or if a
standard LLT
does not hold in this regime;
cf. Remark~\ref{rem2} following Theorem~\ref{thm9}.
\end{remark}

In order to prove Lemma~\ref{lem10}, we will apply Theorems \ref{thm3}
and \ref{thm4} by constructing a Markov chain on graphs with $n$ vertices
which
is reversible with respect to the law of $G(n,p)$. From a given graph $G$,
define a step in the chain to $G'$ by choosing two vertices of $G$ uniformly
at
random and independently resampling the ``edge'' between them. It is
clear that
this Markov chain is reversible with respect to the distribution of $G(n,p)$.
We are now in a position to compute the terms needed to apply
Theorems \ref{thm3} and \ref{thm4}.

%
\begin{lemma}\label{lem11}
Let $(W,W')$ be the number of triangles in the exchangeable pair of
Erd\H{o}s--R\'enyi graphs
$(G,G')$ as defined above. If $Q_1(G)=\IP[W'=W+1|G]$, then
%
\begin{eqnarray}
&&\Var Q_1(G)
\nonumber
\\
\label{27}&&\quad  \leq\frac{(n-2)}{\binom{n}{2}}p^4(1-p) \bigl(1-p^2
\bigr)^{n-3} \bigl(1-p^2(1-p) \bigl(1-p^2
\bigr)^{n-3} \bigr)
\\
\label{28}&&\qquad {} +\frac{4\binom{n-2}{2}}{\binom{n}{2}}p^5(1-p)^2 \bigl(
\bigl(1-2p^2+p^3\bigr)^{n-4}-p
\bigl(1-p^2\bigr)^{2n-6} \bigr)
\\
\label{29}&&\qquad {} +\frac{4\binom{n-2}{2}}{\binom{n}{2}}p^5(1-p)^2\bigl(1-p^2
\bigr)^{2n-8} \bigl(1-p-p\bigl(1-p^2\bigr)^{2}
\bigr)
\\
\label{30}&&\qquad {} +\frac{12\binom{n-2}{3}}{\binom{n}{2}}p^6(1-p)^2 \bigl((1-p)^{n-3}
\bigl(1+p-p^2\bigr)^{n-5}-\bigl(1-p^2
\bigr)^{2n-6} \bigr)
\\
\label{31}&&\qquad {} +\frac{12\binom{n-2}{3}}{\binom{n}{2}}p^6(1-p)^2\bigl(1-p^2
\bigr)^{2n-9} \bigl(-2p+4p^2-3p^4+p^6
\bigr)
\\
\label{32}&&\qquad {} +\frac{3\binom{n-2}{3}}{\binom{n}{2}}p^6(1-p)^2\bigl(1-p^2
\bigr)^{2n-10} \bigl(4p^3-7p^4+4p^6-p^8
\bigr)
\\
\label{33}&&\qquad {} +\frac{12\binom{n-2}{4}}{\binom{n}{2}}p^6(1-p)^2\bigl(1-p^2
\bigr)^{2n-10} \bigl(4p^3-7p^4+4p^6-p^8
\bigr).
\end{eqnarray}
\end{lemma}

\begin{pf}
Let $X_{i,j}$ be the indicator that there is an edge between vertices
$i$ and
$j$ and $V_i^{k,j}:=X_{i,j}X_{i,k}$ be the indicator that there is a $V$-star
on
the vertices $\{i, j, k\}$ with elbow $i$. We easily find
\begin{eqnarray*}
Q_1(G)=\frac{p}{\binom{n}{2}}\sum_{\{j,k\}}\sum
_{i\neq j,k}Y_{i}^{j,k},
\end{eqnarray*}
where we define the indicator variables
\begin{eqnarray*}
Y_{i}^{j,k}=(1-X_{j,k})V_i^{j,k}
\prod_{l\neq i,j,k}\bigl(1-V_l^{j,k}
\bigr).
\end{eqnarray*}
From this point, we note that the variance of $Q_1(G)$ is a sum of covariance
terms times $p^2/\binom{n}{2}^2$. For fixed $i, j$, and $k$, there are
$3\binom{n}{3}$ terms of the form $\Cov(Y_i^{j,k}, Y_u^{s,t})$, where
we are
including $\Var(Y_i^{j,k})$. In order to compute this sum, we will
group these
covariance terms with respect to the number of indices $Y_i^{j,k}$ and
$Y_u^{s,t}$ share, which will yield the lemma after computing their covariances.

As an example of the type of calculation involved in computing these covariance
terms, note that
\begin{eqnarray*}
\IE Y_i^{j,k}=p^2(1-p) \bigl(1-p^2
\bigr)^{2n-6},
\end{eqnarray*}
so that
\begin{eqnarray*}
\Var Y_i^{j,k} =p^2(1-p) \bigl(1-p^2
\bigr)^{2n-6}\bigl(1-p^2(1-p) \bigl(1-p^2
\bigr)^{2n-6}\bigr).
\end{eqnarray*}
Furthermore, for $j\neq s$, we find that
\begin{eqnarray*}
\IE \bigl\{Y_i^{j,k}Y_{i}^{s,k} \bigr
\} =p^3(1-p)^2\bigl((1-p)^3+3p(1-p)^2+p^2(1-p)
\bigr)^{n-4}.
\end{eqnarray*}
Below we focus on carefully spelling out the number and types of covariance
terms that contribute to the variance of $Q_1(G)$ and leave to the reader
detailed calculations similar to those above.

If $\{i,j,k\}=\{u,s,t\}$ and $Y_i^{j,k}\neq Y_u^{s,t}$, then
$\IE \{Y_i^{j,k}
Y_u^{s,t} \} =0$, so the corresponding covariance term is
negative, which
we bound
above by zero. In the case that $Y_i^{j,k}=Y_u^{s,t}$, we obtain a variance
term
which corresponds to \eqref{27} in our bound.

Assume now that $\{i,j,k\}$ and $\{u,s,t\}$ have exactly two elements in
common
and consider which indices are equal. In the cases that $Y_u^{s,t}$ is equal
to
$Y_u^{j,k}$, $Y_j^{k,t}$, $Y_u^{i,k}$, $Y_u^{i,j}$ or $Y_k^{j,t}$, then
$\IE
\{Y_i^{j,k} Y_u^{s,t} \} =0$, so the corresponding
covariance term is
negative,
which we bound above by zero. The two remaining cases to consider are
$Y_{u}^{s,t}=Y_i^{s,k}$ which contribute $2(n-3)$ equal covariance terms
leading
to \eqref{28}, and $Y_{u}^{s,t}=Y_j^{i,t}$ which also contribute
$2(n-3)$ equal
covariance terms leading to \eqref{29}.

Assume $\{i,j,k\}$ and $\{u,s,t\}$ have exactly one element in common;
we have
four cases to consider. There are $2(n-3)(n-4)$ covariance terms of the basic
form $Y_u^{s,t}=Y_u^{j,t}$ which leads to \eqref{30}, there are $2(n-3)(n-4)$
covariance terms of the basic forms $Y_u^{s,t}=Y_u^{i,t}$ or
$Y_u^{s,t}=Y_j^{s,t}$ which leads to \eqref{31}, and there are $\binom
{n-3}{2}$
terms of the form $Y_u^{s,t}=Y_i^{s,t}$ which yields \eqref{32}.

Finally, if $\{i,j,k\}$ and $\{u,s,t\}$ are distinct sets, of which we have
$3\binom{n-3}{3}$ ways of obtaining $Y_{u}^{s,t}$, the corresponding
covariance
terms contribute \eqref{33} to the bound.
\end{pf}

\begin{lemma}\label{lem12}
Let $(W,W')$ be the number of triangles in the exchangeable pair of
Erd\H{o}s--R\'enyi graphs
$(G,G')$ as defined above. If $Q_{-1}(G)=\IP[W'=W-1|G]$, then
\begin{eqnarray*}
\Var Q_{-1}(G) &\leq&\frac{(n-2)}{\binom{n}{2}}p^3(1-p)^2
\bigl(1-p^2\bigr)^{n-3} \bigl(1-p^3
\bigl(1-p^2\bigr)^{n-3} \bigr)
\\
&&{} +\frac{2(n-2)}{\binom{n}{2}}p^3(1-p)^2 \bigl(
\bigl(1-2p^2+p^3\bigr)^{n-3}-p^3
\bigl(1-p^2\bigr)^{2n-6} \bigr)
\\
&&{} +\frac{4\binom{n-2}{2}}{\binom{n}{2}}p^5(1-p)^2 \bigl((1-p)
\bigl(1-2p+p^3\bigr)^{n-4}-p\bigl(1-p^2
\bigr)^{2n-6} \bigr)
\\
&&{} +\frac{4\binom{n-2}{2}}{\binom{n}{2}}p^5(1-p)^2\bigl(1-p^2
\bigr)^{2n-8} \bigl(1-p-p\bigl(1-p^2\bigr)^{2}
\bigr)
\\
&&{} +\frac{12\binom{n-2}{3}}{\binom{n}{2}}p^6(1-p)^2 \bigl((1-p)^3
\bigl(1-2p+p^3\bigr)^{n-5}-\bigl(1-p^2
\bigr)^{2n-6} \bigr)
\\
&&{} +\frac{12\binom{n-2}{3}}{\binom{n}{2}}p^6(1-p)^2\bigl(1-p^2
\bigr)^{2n-9} \bigl((1-p)^2-\bigl(1-p^2
\bigr)^{3} \bigr)
\\
&&{} +\frac{3\binom{n-2}{3}}{\binom{n}{2}}p^6(1-p)^2\bigl(1-p^2
\bigr)^{2n-10} \bigl(4p^3-7p^4+4p^6-p^8
\bigr)
\\
&&{} +\frac{12\binom{n-2}{4}}{\binom{n}{2}}p^6(1-p)^2\bigl(1-p^2
\bigr)^{2n-10} \bigl(4p^3-7p^4+4p^6-p^8
\bigr).
\end{eqnarray*}
\end{lemma}

\begin{pf}
As in the proof of Lemma~\ref{lem11}, let $X_{i,j}$ be the indicator that
there
is an edge between vertices $i$ and $j$ and $V_i^{k,j}:=X_{i,j}X_{i,k}$
be the
indicator that there is a $V$-star on the vertices $\{i, j, k\}$ with elbow
$i$.
We easily find
\begin{eqnarray*}
Q_{-1}(G)= \frac{(1-p)}{\binom{n}{2}}\sum_{\{j,k\}}
\sum_{i\neq j,k}X_{j,k}V_i^{j,k}
\prod_{l\neq i,j,k}\bigl(1-V_l^{j,k}
\bigr),
\end{eqnarray*}
and from this point, the proof is very similar to the proof of Lemma~\ref{lem11}.
\end{pf}

We are now in the position to prove Lemma~\ref{lem10}.

\begin{pf*}{Proof of Lemma~\ref{lem10}}
Recall the notation of Lemmas \ref{lem11} and \ref{lem12}. We have the
following easy facts:
\begin{enumerate}[(iii)]
\item[(i)]$\IE W = \binom{n} {3}p^3$,

\item[(ii)]$\sigma_n^2:=\Var W = \binom {n} {3}
(p^3(1-p^3)+3(n-3)p^5(1-p)
)$,

\item[(iii)]$ q_1 = \IE Q_1(G) = (n-2)p^3(1-p)
(1-p^2)^{n-3}$;
\end{enumerate}
the second item yields the assertion about the rate of $\Var(W)$.
The first bound in \eqref{26} now follows from Theorem~\ref{thm3}
after noting
that
for $(W,W')$ the number of triangles in the exchangeable pair of Erd\H
{o}s--R\'enyi graphs
$(G,G')$ as defined above and $n^{\alpha}p\to c>0$ for $1/2\leq\alpha<1$,
then
\begin{eqnarray*}
\Var Q_1(G)= \bigo \bigl(p^5 \bigr), \qquad \Var
Q_{-1}(G) = \bigo \bigl(p^3/n \bigr),
\end{eqnarray*}
which follows easily from Lemmas \ref{lem11} and \ref{lem12} above.

In order to prove the second bound in \eqref{26}, we will apply Theorem~\ref{thm4}
with $G=G(n,p)$ an Erd\H{o}s--R\'enyi random graph, $G'$ obtained by
taking a step
from $G$
in
the Markov chain (reversible with respect to the law of $G(n,p)$) defined
previously, and $G''$ obtained as a step from $G'$ in the same Markov chain.
Setting $(W,W',W'')$ to be the number of triangles in the graphs $(G,G',G'')$,
and defining (as per Theorem~\ref{thm4})
\begin{eqnarray*}
Q_{i,i}(G)=\IP\bigl[W''=W'+i,
W'=W+i|G\bigr],
\end{eqnarray*}
it is easy to see that
\begin{eqnarray*}
Q_{1,1}(G)=\frac{p^2}{\binom{n}{2}^2}\sum_{\{j,k\}}
\sum_{i\neq
j,k}Y_{i}^{j,k} \sum
_{\{s,t\}\neq\{j,k\}}\sum_{u\neq s,t}Y_{u}^{s,t},
\end{eqnarray*}
where as in the proof of Lemma~\ref{lem11} we define
\begin{eqnarray*}
Y_{i}^{j,k}=(1-X_{j,k})X_{i,j}X_{i,k}
\prod_{l\neq i,j,k}(1-X_{l,j}X_{l,k}).
\end{eqnarray*}
From this point, we find
\begin{eqnarray*}
\IE\bigl\llvert Q_{1,1}(G)-Q_1(G)^2\bigr
\rrvert =\frac{p}{\binom{n}{2}}q_1,
\end{eqnarray*}
since for fixed $\{j,k\}$, only one of the set $\{Y_{i}^{j,k}\}
_{i=1}^n$ can
be
non-zero. A similar analysis shows
\begin{eqnarray*}
\IE\bigl\llvert Q_{-1,-1}(G)-Q_{-1}(G)^2\bigr
\rrvert =\frac
{1-p}{\binom{n}{2}}q_1
\end{eqnarray*}
and the second bound in \eqref{26} now follows from Theorem~\ref
{thm4} after
collecting the pertinent facts above.
\end{pf*}

\subsection{Embedded sum of independent random variables}\label{sec7}

We consider the case where $W$
has an embedded sum of independent random variables. This setting has
been the
most prominent way to prove LLTs by probabilistic arguments;
see, for example, Davis and McDonald \cite{Davis1995},
R{\"o}llin \cite{Rollin2005}, Barbour
\cite{Barbour2009},
Behrisch, Coja-Oghlan and Kang \cite{Behrisch2010a} and Penrose and Peres \cite{Penrose2010}. In this case, our theory
can be used to obtain bounds on the rates for an LLT using previously
established
bounds on rates of convergence in other metrics.

Let $W$ be an integer valued random variable with variance $\sigma^2$
and let
$\mC{F}$ be some $\sigma$-algebra. Assume that $W$ allows for a
decomposition of
the form
%
\begin{equation}
\label{34} W = Y+\sum_{i=1}^{N}
Z_i,
\end{equation}
where $N$ is $\mC{F}$-measurable, and where, conditional on $\mC{F}$, we have that
$Y,Z_1,\dots,Z_N$ are all independent of each other.
Note that in what follows, the distribution of $Y$ is not
relevant.

%
\begin{theorem}\label{thm12} Let $W=W(\sigma)$ be a family of integer valued
random variables satisfying \eqref{34} and with $\Var W = \sigma^2$.
Assume there are
constants $u$ and $\beta$, independent of $\sigma^2$, such that, conditional
on $\mC{F}$,
%
\begin{equation}
\label{35} 0< u \leq1-{\tfrac{1}{2}}D_1(Z_i)
\end{equation}
for all $1\leq i \leq N$, and such that
%
\begin{equation}
\label{36} \IP\bigl[N < \beta\sigma^2\bigr] = \bigo\bigl(
\sigma^{-k}\bigr)
\end{equation}
as $\sigma\toinf$ for some $k\geq2$. Then, with $\ww{W}
= (W-\IE W)/\sigma$, and as $\sigma\to\infty$,
\begin{eqnarray*}
\dloc \bigl(\law(W),\TP\bigl(\IE W,\sigma^2\bigr) \bigr) = \bigo
\biggl(\frac{\dk (\law(\ww{W}),\Phi )^{1-1/k}}{\sigma
} \biggr)
\end{eqnarray*}
and
\begin{eqnarray*}
\dloc \bigl(\law(W),\TP\bigl(\IE W,\sigma^2\bigr) \bigr) = \bigo
\biggl(\frac{\dw (\law(\ww{W}),\Phi
)^{1-2/(k+1)}}{\sigma} \biggr).
\end{eqnarray*}
Retaining the previous hypotheses, if \eqref{36} holds now for some
$k\geq1$,
then
\begin{eqnarray*}
\dtv \bigl(\law(W),\TP\bigl(\IE W,\sigma^2\bigr) \bigr) = \bigo
\bigl(\dw \bigl(\law(\ww{W}),\Phi \bigr)^{k/(k+1)} \bigr),
\end{eqnarray*}
as $\sigma\toinf$.
\end{theorem}

\begin{pf} First, consider the setup conditional on $\mC{F}$.
Divide the sum $Z_1+\cdots+Z_N$ into $k$ successive blocks,
each of size ${\lfloor N/k\rfloor}$, plus one last block with less
than ${\lfloor N/k\rfloor}$
elements. By
Lemma~\ref{lem5},
we have
\begin{eqnarray*}
D_k(W|\mC{F})\leq2 \prod_{l=1}^{k}
\eta_{N,l},
\end{eqnarray*}
where
\begin{eqnarray*}
\eta_{N,l} & =& D_1 (Z_{(l-1){\lfloor N/k\rfloor}+1}+\cdots+
Z_{l{\lfloor
N/k\rfloor} } )
\\
& \leq&\sqrt{\frac{8}{\uppi}} \Biggl(\frac{1}{4} + \sum
_{i=(l-1){\lfloor N/k\rfloor}+1}^{l{\lfloor N/k\rfloor} } \biggl(1-{\frac{1}{2}}D_1(Z_i)
\biggr) \Biggr)^{-1/2},
\end{eqnarray*}
where we have used Lemma~\ref{lem6}. Therefore, using assumption
\eqref
{35}, for
$l=1,\ldots,k$,
\begin{eqnarray*}
\eta_{N,l} \leq \biggl(\frac{8}{\uppi{\lfloor N/k\rfloor}u} \biggr)^{1/2}.
\end{eqnarray*}
Now, assume without loss of generality that $\sigma^2>k/\beta$.
In this case
\begin{eqnarray*}
\I\bigl[N\geq\beta\sigma^2\bigr]\frac{1}{{\lfloor N/k\rfloor}}\leq
\frac{1}{\beta\sigma^2/k-1}
\end{eqnarray*}
and since we can always trivially bound
$D_k(W|\mC{F})$ by $2^k$ because $\eta_{N,l}\leq2$, we have
\begin{eqnarray*}
D_k(W|\mC{F}) \leq2^k\I\bigl[N < \beta
\sigma^2\bigr] + \I\bigl[N \geq\beta\sigma^2\bigr] \biggl(
\frac{8k}{\uppi u
(\beta\sigma^2-k)} \biggr)^{k/2}.
\end{eqnarray*}
Hence, by Lemma~\ref{lem4},
\begin{eqnarray*}
D_k(W)\leq\IE D_k(W|\mC{F}) \leq2^k
\IP\bigl[N < \beta\sigma^2\bigr]+ \biggl(\frac{8k}{\uppi u (\beta\sigma^2-k)}
\biggr)^{k/2},
\end{eqnarray*}
which is $\bigo(\sigma^{-k})$ as $\sigma\toinf$. After noting
that $W$ is
integer valued and, hence, $\sigma^{-1}
=\bigo (\dw (\law(\ww{W}),\Phi )\wedge\dk (\law
(\ww{W}),\Phi ) )$, the
claims now follows easily from (ii), (iii), and (iv) of
Theorem~\ref{thm2}, and Lemma~\ref{lem7}, keeping in mind \eqref{225}.
\end{pf}

Note that, under the stated conditions, Theorem~\ref{thm12} implies
the LLT for
$W$ if it satisfies the CLT, as the latter also implies convergence in the
Kolmogorov metric. If a rate of convergence is available, Theorem~\ref{thm12}
also yields an upper bound on the rate of convergence for the LLT.

To illustrate Theorem~\ref{thm12}, we consider the so-called
\emph{independence number} of a random graph. The independence number
of a
graph $G$ is defined to be the maximal number of vertices that can be chosen
from the graph so that no two of these vertices are connected.

Consider the following random graph model, which is a
simplified version of one discussed by Penrose and Yukich \cite
{Penrose2005}. Let the
open set
$U\subset\IR^d$, $d\geq1$, be of finite volume, which, without loss of
generality, we assume to be $1$. Let $\mC{X}$ be a homogeneous Poisson point
process on $U$ with intensity $\lambda$ with respect to the Lebesgue measure.
Define $G(\mC{X},r)$ to be the graph on the vertex set $\mC{X}$ by connecting two
vertices whenever they are at most distant $r$ apart from each other.
In the
context of this random geometric graph, the independence number is the maximal
number of closed balls of radius $r/2$ with centers chosen from $\mC{X}$,
so that
no two balls are intersecting.

%
\begin{theorem}[(Penrose and Yukich \cite{Penrose2005})]\label
{thm13} For $b>0$, let $W_b$
be the
independence number in $G(\mC{X},b\lambda^{-1/d})$. Then, if $b$
is small enough, we have $\Var
W_b\asymp\lambda$ and
\begin{eqnarray*}
\dk \biggl(\law \biggl(\frac{W_b-\IE W_b}{\sqrt{\Var W_b}} \biggr), \N (0,1) \biggr) =\bigo
\bigl(\log(\lambda)^{3d}\lambda^{-1/2} \bigr)
\end{eqnarray*}
as $\lambda\toinf$.
\end{theorem}

The condition ``$b$ is small enough'' is described in greater detail
in Section~2.4 of Penrose and Yukich \cite{Penrose2005} and is
necessary to guarantee the
asymptotic order of the variance of $W_b$. We can give a local limit
result as follows.

%
\begin{theorem} Under the assumptions of Theorem~\ref{thm13}, we have
that for
every $\eps>0$,
\begin{eqnarray*}
\dloc \bigl(\law(W_b), \TP(\IE W_b,\Var
W_b) \bigr) =\bigo \bigl(\lambda^{-1+\eps} \bigr),
\end{eqnarray*}
as $\lambda\toinf$.
\end{theorem}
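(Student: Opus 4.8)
The plan is to apply Theorem~\ref{thm5} to the independence number $W_b$, so the real work is checking that the random geometric graph fits the decomposition~\eq{12} with the requisite conditions~\eq{13} and~\eq{14}. First I would set up the spatial decomposition: partition the ambient set $U$ into a grid of small cubes of side length comparable to $r=b\lambda^{-1/d}$, and let $\%F$ be the $\sigma$-algebra generated by the point configuration on the ``odd'' cells (say, those in a checkerboard colouring, plus a boundary layer), so that conditionally on $\%F$ the contributions coming from the disjoint ``even'' cells are independent. More precisely, one takes $N$ to be (a lower bound on) the number of even cells that are ``isolated'' in the sense that no vertex in a neighbouring odd cell lies within distance $r$ of them; on each such cell the local contribution $Z_i$ to the independence number is a nontrivial integer-valued random variable independent of the others given $\%F$, and $Y$ absorbs everything else. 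Since a single cell contains $\Theta(1)$ Poisson points and the independence number restricted to it is genuinely random, we get $D_1(Z_i)<2$ uniformly, hence a constant $u>0$ with $0<u\le 1-\tfrac12 D_1(Z_i)$, which is~\eq{13}.

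Next I would verify~\eq{14}. The number of even cells is $\Theta(\lambda)$, and a cell fails to be ``isolated'' only if a nearby odd cell is atypically populated or empty in a way that couples the two; each cell is good with probability bounded below by a constant, and the failures are spatially local (finite-range dependence), so standard concentration for $\lambda$-dependent sums of bounded, finitely-dependent indicators (e.g.\ a Bernstein/Azuma-type bound, or even crude moment bounds) gives $\IP[N<\beta\sigma^2]$ decaying faster than any polynomial in $\sigma$, certainly $\bigo(\sigma^{-k})$ for every fixed $k$. Combined with $\Var W_b\asymp\lambda$ from Theorem~\ref{thm13}, so that $\sigma^2\asymp\lambda$, Theorem~\ref{thm5} then yields
\be{
  \dloc\bklr{\law(W_b),\TP(\IE W_b,\Var W_b)}
  = \bigo\bbbbklr{\frac{\dk\bklr{\law(\~W_b),\Phi}^{1-1/k}}{\sigma}}
}
for every $k\ge 2$. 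Plugging in the Kolmogorov rate $\dk(\law(\~W_b),\Phi)=\bigo(\log(\lambda)^{3d}\lambda^{-1/2})$ from Theorem~\ref{thm13} and $\sigma\asymp\lambda^{1/2}$, the right-hand side is $\bigo(\log(\lambda)^{3d(1-1/k)}\lambda^{-1/2-(1-1/k)/2})=\bigo(\lambda^{-1+1/(2k)}\log(\lambda)^{3d})$; choosing $k$ large enough (depending on the given $\eps$) absorbs the logarithmic factor and the $1/(2k)$ loss into $\lambda^{\eps}$, giving the claimed $\bigo(\lambda^{-1+\eps})$.

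The main obstacle is the geometric/probabilistic bookkeeping needed to extract the decomposition~\eq{12}: one must exhibit an honest $\sigma$-algebra $\%F$ and a conditionally independent family $Z_1,\dots,Z_N$ whose sum really is (part of) $\beta(G(\%X,b\lambda^{-1/d}))$, which requires being careful that the independence number is not simply additive over regions—it is only sub/superadditive up to boundary corrections, and those corrections are exactly what $Y$ and the non-isolated cells must absorb. The cleanest route is probably to define $Z_i$ as the increase in a greedily-built independent set attributable to cell $i$ when cell $i$ is surrounded by empty margin, a quantity that is measurable with respect to the even-cell configurations and genuinely independent across isolated even cells given $\%F$; verifying this decomposition, together with the uniform lower bound $u>0$ on $1-\tfrac12 D_1(Z_i)$, is where essentially all the content lies. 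Everything after that—the tail bound~\eq{14}, the variance asymptotics, and the final optimisation over $k$—is routine given Theorems~\ref{thm5} and~\ref{thm13}.
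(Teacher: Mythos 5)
Your high-level strategy matches the paper exactly: exhibit an embedded sum as in \eq{12}, verify \eq{13} and \eq{14}, invoke Theorem~\ref{thm5} with the Kolmogorov rate from Theorem~\ref{thm13}, and send $k\toinf$ to absorb the $\log$ factor and the $1/(2k)$ loss. The final optimisation step is identical.

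Where you diverge---and where you leave a genuine gap---is in the geometric decomposition, which you yourself flag as ``where essentially all the content lies.'' The paper does \emph{not} tile $U$ with a checkerboard of cubes; it simply places $n\asymp\lambda$ pairwise-disjoint balls $B_{3r}(x_i)$ with $r=b\lambda^{-1/d}/2$, and conditions on the annulus indicators $I_i=\I[\partial B_r(x_i)\cap\%X=\emptyset]$ rather than on the full configuration in alternating cells. The key geometric observation you miss is that $B_r(x_i)$ has diameter exactly the connection radius $2r=b\lambda^{-1/d}$, so all points inside it are mutually connected and its contribution to the independence number is precisely the Bernoulli indicator $J_i=\I[B_r(x_i)\cap\%X\neq\emptyset]$ whenever the annulus is empty. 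There is no greedy construction, no sub/superadditivity bookkeeping, and no boundary-correction term to control: the decomposition $W_b=Y+\sum_j J_{K_j}$ with $\%F=\sigma(I_1,\dots,I_n)$ is exact. Because the balls are disjoint and $\%X$ is Poisson, the $I_i$ are genuinely i.i.d., so $N\sim\Bi(n,p)$ and \eq{14} for every $k$ follows from elementary binomial tail bounds rather than the finite-range-dependence concentration you propose. Your checkerboard partition with ``isolated even cells,'' greedily-defined $Z_i$, and boundary layers could plausibly be made to work, but as written it does not establish that the resulting $Z_i$ are conditionally independent, integer-valued, and actually sum (together with $Y$) to $W_b$; the order-dependence of a greedy construction is exactly the kind of issue the paper's clique-ball trick is designed to avoid.

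In short: same theorem invoked, same final rate computation, but the paper's decomposition is both simpler (disjoint balls, no tiling) and sharper (exact Bernoulli contributions, i.i.d.\ indicators), and it closes the gap you acknowledge leaving open.
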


\begin{pf} Let $R=b\lambda^{-1/d}/2$. Denote by $B_R(x)$
the closed ball with radius $R$ and center $x$; define $\partial B_R(x) =
B_{2R}(x)\setminus B_{R}(x)$. Now, choose $n$ non-intersecting balls in
$U$, each of radius $3R$ and centers $x_1,\dots,x_n$; it is clear
that it is possible to have $n\asymp\lambda$. For ball $B_{3R}(x_i)$,
define the indicators
\begin{eqnarray*}
I_i = \I\bigl[\partial B_R(x_i)
\cap\mC{X}\mbox{ is empty}\bigr], \qquad J_i = \I\bigl[
B_R(x_i)\cap\mC{X}\mbox{ is not empty}\bigr].
\end{eqnarray*}
Note that the $I_i$ are independent and identically distributed and,
hence, $N = \sum_{i=1}^n I_i \sim\Bi(n,p)$ with $p = \IE I_i$ being bounded
away from $1$ and $0$ as $\lambda\toinf$. We let
$\mC{F}=\sigma(I_1,\dots,I_n)$. Furthermore, note that if $I_1=1$, then
$J_i$ is
exactly the contribution of the ball $B_{2R}(x_i)$ to the independence number
$W_b$, as within $B_R(x_i)$ all the vertices are connected and there is no
connection to any other vertices outside $B_{R}(x_i)$. Therefore we can find
$Y$ such that
\begin{eqnarray*}
W_n = Y + \sum_{j=1}^N
J_{K_j},
\end{eqnarray*}
where $K_1,\dots,K_N$ are the indices of those balls with $I_i=1$. Given
$\mC{F}$, note that $J_{K_1},\dots,J_{K_N}$ are independent $\Be(q)$,
with  $q = \IE J_{K_j}$ being bounded away from $0$ and
$1$, and they are also independent of $Y$. This implies condition
\eqref
{35} for
$u=q\wedge(1-q)$ which is bounded away from $0$ as $\lambda\toinf$. Using
usual exponential tail bounds for the binomial distribution, it is easy
to see that, for every $k$, one can find $\beta$ such that \eqref{36}
holds. In
combination with Theorem~\ref{thm13}, Theorem~\ref{thm12} now yields
the claim.
\end{pf}

\section*{Acknowledgements}

We are indebted to Steven Evans for pointing out that the inequalities we
proved and used in an earlier version of the manuscript were in fact
Landau--Kolmogorov inequalities, and thank Peter Eichelsbacher for
helpful discussions. We are also grateful to the two anonymous referees
for their helpful comments.

Both authors were partially supported by NUS research
grant R-155-000-098-133 and NR would like to express his gratitude for
the kind
hospitality of the Department of Statistics and Applied
Probability, NUS, during his research visit. AR was supported by NUS research
grant R-155-000-124-112.




\printhistory


\begin{thebibliography}{32}


\bibitem{Barbour2009}
\begin{barticle}[mr]
\bauthor{\bsnm{Barbour},~\bfnm{A.~D.}\binits{A.D.}}
(\byear{2009}).
\btitle{Univariate approximations in the infinite occupancy scheme}.
\bjournal{ALEA Lat. Am. J. Probab. Math. Stat.}
\bvolume{6}
\bpages{415--433}.
\bid{issn={1980-0436}, mr={2576025}}
\end{barticle}
\bptok{imsref}%
\endbibitem

\bibitem{Barbour2002}
\begin{barticle}[mr]
\bauthor{\bsnm{Barbour},~\bfnm{A.~D.}\binits{A.D.}} \AND
\bauthor{\bsnm{{\v{C}}ekanavi{\v{c}}ius},~\bfnm{V.}\binits{V.}}
(\byear{2002}).
\btitle{Total variation asymptotics for sums of independent integer random variables}.
\bjournal{Ann. Probab.}
\bvolume{30}
\bpages{509--545}.
\bid{doi={10.1214/aop/1023481001}, issn={0091-1798}, mr={1905850}}
\end{barticle}
\bptok{imsref}%
\endbibitem

\bibitem{Barbour1992}
\begin{bbook}[mr]
\bauthor{\bsnm{Barbour},~\bfnm{A.~D.}\binits{A.D.}},
\bauthor{\bsnm{Holst},~\bfnm{Lars}\binits{L.}} \AND
\bauthor{\bsnm{Janson},~\bfnm{Svante}\binits{S.}}
(\byear{1992}).
\btitle{Poisson Approximation}.
\bseries{Oxford Studies in Probability}
\bvolume{2}.
\blocation{New York}:
\bpublisher{The Clarendon Press, Oxford Univ. Press}.
\bid{mr={1163825}}
\end{bbook}
\bptok{imsref}%
\endbibitem

\bibitem{Barbour1989}
\begin{barticle}[mr]
\bauthor{\bsnm{Barbour},~\bfnm{A.~D.}\binits{A.D.}},
\bauthor{\bsnm{Karo{\'n}ski},~\bfnm{Micha{\l}}\binits{M.}} \AND
\bauthor{\bsnm{Ruci{\'n}ski},~\bfnm{Andrzej}\binits{A.}}
(\byear{1989}).
\btitle{A central limit theorem for decomposable random variables with applications to random graphs}.
\bjournal{J. Combin. Theory Ser. B}
\bvolume{47}
\bpages{125--145}.
\bid{doi={10.1016/0095-8956(89)90014-2}, issn={0095-8956}, mr={1047781}}
\end{barticle}
\bptok{imsref}%
\endbibitem

\bibitem{Barbour1999}
\begin{barticle}[mr]
\bauthor{\bsnm{Barbour},~\bfnm{Andrew~D.}\binits{A.D.}} \AND
\bauthor{\bsnm{Xia},~\bfnm{Aihua}\binits{A.}}
(\byear{1999}).
\btitle{Poisson perturbations}.
\bjournal{ESAIM Probab. Statist.}
\bvolume{3}
\bpages{131--150}.
\bid{doi={10.1051/ps:1999106}, issn={1292-8100}, mr={1716120}}
\end{barticle}
\bptok{imsref}%
\endbibitem

\bibitem{Behrisch2010a}
\begin{barticle}[mr]
\bauthor{\bsnm{Behrisch},~\bfnm{Michael}\binits{M.}},
\bauthor{\bsnm{Coja-Oghlan},~\bfnm{Amin}\binits{A.}} \AND
\bauthor{\bsnm{Kang},~\bfnm{Mihyun}\binits{M.}}
(\byear{2010}).
\btitle{The order of the giant component of random hypergraphs}.
\bjournal{Random Structures Algorithms}
\bvolume{36}
\bpages{149--184}.
\bid{doi={10.1002/rsa.20282}, issn={1042-9832}, mr={2583059}}
\end{barticle}
\bptok{imsref}%
\endbibitem

\bibitem{Bender1997}
\begin{barticle}[mr]
\bauthor{\bsnm{Bender},~\bfnm{Edward~A.}\binits{E.A.}},
\bauthor{\bsnm{Canfield},~\bfnm{E.~Rodney}\binits{E.R.}} \AND
\bauthor{\bsnm{McKay},~\bfnm{Brendan~D.}\binits{B.D.}}
(\byear{1997}).
\btitle{The asymptotic number of labeled graphs with {$n$} vertices, {$q$} edges, and no isolated vertices}.
\bjournal{J. Combin. Theory Ser. A}
\bvolume{80}
\bpages{124--150}.
\bid{doi={10.1006/jcta.1997.2798}, issn={0097-3165}, mr={1472108}}
\end{barticle}
\bptok{imsref}%
\endbibitem

\bibitem{Chatterjee2007}
\begin{barticle}[mr]
\bauthor{\bsnm{Chatterjee},~\bfnm{Sourav}\binits{S.}}
(\byear{2007}).
\btitle{Stein's method for concentration inequalities}.
\bjournal{Probab. Theory Related Fields}
\bvolume{138}
\bpages{305--321}.
\bid{doi={10.1007/s00440-006-0029-y}, issn={0178-8051}, mr={2288072}}
\end{barticle}
\bptok{imsref}%
\endbibitem

\bibitem{Chatterjee2005}
\begin{barticle}[mr]
\bauthor{\bsnm{Chatterjee},~\bfnm{Sourav}\binits{S.}},
\bauthor{\bsnm{Diaconis},~\bfnm{Persi}\binits{P.}} \AND
\bauthor{\bsnm{Meckes},~\bfnm{Elizabeth}\binits{E.}}
(\byear{2005}).
\btitle{Exchangeable pairs and {P}oisson approximation}.
\bjournal{Probab. Surv.}
\bvolume{2}
\bpages{64--106}.
\bid{doi={10.1214/154957805100000096}, issn={1549-5787}, mr={2121796}}
\end{barticle}
\bptok{imsref}%
\endbibitem

\bibitem{Chatterjee2011a}
\begin{barticle}[mr]
\bauthor{\bsnm{Chatterjee},~\bfnm{Sourav}\binits{S.}} \AND
\bauthor{\bsnm{Shao},~\bfnm{Qi-Man}\binits{Q.-M.}}
(\byear{2011}).
\btitle{Nonnormal approximation by {S}tein's method of exchangeable pairs with application to the {C}urie--{W}eiss model}.
\bjournal{Ann. Appl. Probab.}
\bvolume{21}
\bpages{464--483}.
\bid{doi={10.1214/10-AAP712}, issn={1050-5164}, mr={2807964}}
\end{barticle}
\bptok{imsref}%
\endbibitem

\bibitem{Chen2012}
\begin{barticle}[mr]
\bauthor{\bsnm{Chen},~\bfnm{Louis~H.~Y.}\binits{L.H.Y.}},
\bauthor{\bsnm{Fang},~\bfnm{Xiao}\binits{X.}} \AND
\bauthor{\bsnm{Shao},~\bfnm{Qi-Man}\binits{Q.-M.}}
(\byear{2013}).
\btitle{From {S}tein identities to moderate deviations}.
\bjournal{Ann. Probab.}
\bvolume{41}
\bpages{262--293}.
\bid{doi={10.1214/12-AOP746}, issn={0091-1798}, mr={3059199}}
\end{barticle}
\bptok{imsref}%
\endbibitem

\bibitem{Chen2011}
\begin{bbook}[mr]
\bauthor{\bsnm{Chen},~\bfnm{Louis~H.~Y.}\binits{L.H.Y.}},
\bauthor{\bsnm{Goldstein},~\bfnm{Larry}\binits{L.}} \AND
\bauthor{\bsnm{Shao},~\bfnm{Qi-Man}\binits{Q.-M.}}
(\byear{2011}).
\btitle{Normal Approximation by {S}tein's Method}.
\bseries{Probability and Its Applications (New York)}.
\blocation{Heidelberg}:
\bpublisher{Springer}.
\bid{doi={10.1007/978-3-642-15007-4}, mr={2732624}}
\end{bbook}
\bptok{imsref}%
\endbibitem

\bibitem{Davis1995}
\begin{barticle}[mr]
\bauthor{\bsnm{Davis},~\bfnm{Burgess}\binits{B.}} \AND
\bauthor{\bsnm{McDonald},~\bfnm{David}\binits{D.}}
(\byear{1995}).
\btitle{An elementary proof of the local central limit theorem}.
\bjournal{J. Theoret. Probab.}
\bvolume{8}
\bpages{693--701}.
\bid{doi={10.1007/BF02218051}, issn={0894-9840}, mr={1340834}}
\end{barticle}
\bptok{imsref}%
\endbibitem

\bibitem{Eichelsbacher2010}
\begin{barticle}[mr]
\bauthor{\bsnm{Eichelsbacher},~\bfnm{Peter}\binits{P.}} \AND
\bauthor{\bsnm{L{\"o}we},~\bfnm{Matthias}\binits{M.}}
(\byear{2010}).
\btitle{Stein's method for dependent random variables occurring in statistical mechanics}.
\bjournal{Electron. J. Probab.}
\bvolume{15}
\bpages{962--988}.
\bid{doi={10.1214/EJP.v15-777}, issn={1083-6489}, mr={2659754}}
\end{barticle}
\bptok{imsref}%
\endbibitem

\bibitem{Ellis1985}
\begin{bbook}[mr]
\bauthor{\bsnm{Ellis},~\bfnm{Richard~S.}\binits{R.S.}}
(\byear{1985}).
\btitle{Entropy, Large Deviations, and Statistical Mechanics}.
\bseries{Grundlehren der Mathematischen Wissenschaften [Fundamental Principles of Mathematical Sciences]}
\bvolume{271}.
\blocation{New York}:
\bpublisher{Springer}.
\bid{doi={10.1007/978-1-4613-8533-2}, mr={0793553}}
\end{bbook}
\bptok{imsref}%
\endbibitem

\bibitem{Ellis1980}
\begin{barticle}[mr]
\bauthor{\bsnm{Ellis},~\bfnm{Richard~S.}\binits{R.S.}},
\bauthor{\bsnm{Newman},~\bfnm{Charles~M.}\binits{C.M.}} \AND
\bauthor{\bsnm{Rosen},~\bfnm{Jay~S.}\binits{J.S.}}
(\byear{1980}).
\btitle{Limit theorems for sums of dependent random variables occurring in statistical mechanics. {II}. {C}onditioning, multiple phases, and metastability}.
\bjournal{Z.~Wahrsch. Verw. Gebiete}
\bvolume{51}
\bpages{153--169}.
\bid{doi={10.1007/BF00536186}, issn={0044-3719}, mr={0566313}}
\end{barticle}
\bptok{imsref}%
\endbibitem

\bibitem{Gibbs2002}
\begin{barticle}[auto:STB|2014/02/12|14:17:21]
\bauthor{\bsnm{Gibbs},~\bfnm{A.~L.}\binits{A.L.}} \AND
\bauthor{\bsnm{Su},~\bfnm{F.~E.}\binits{F.E.}}
(\byear{2002}).
\btitle{On choosing and bounding probability metrics}.
\bjournal{Internat. Statist. Rev.}
\bvolume{70}
\bpages{419--435}.
\end{barticle}
\bptok{imsref}%
\endbibitem

\bibitem{Goldstein2006}
\begin{barticle}[mr]
\bauthor{\bsnm{Goldstein},~\bfnm{Larry}\binits{L.}} \AND
\bauthor{\bsnm{Xia},~\bfnm{Aihua}\binits{A.}}
(\byear{2006}).
\btitle{Zero biasing and a discrete central limit theorem}.
\bjournal{Ann. Probab.}
\bvolume{34}
\bpages{1782--1806}.
\bid{doi={10.1214/009117906000000250}, issn={0091-1798}, mr={2271482}}
\end{barticle}
\bptok{imsref}%
\endbibitem

\bibitem{Hardy1935}
\begin{barticle}[mr]
\bauthor{\bsnm{Hardy},~\bfnm{G.~H.}\binits{G.H.}},
\bauthor{\bsnm{Landau},~\bfnm{E.}\binits{E.}}
 \AND
\bauthor{\bsnm{Littlewood},~\bfnm{J.~E.}\binits{J.E.}}
(\byear{1935}).
\btitle{Some inequalities satisfied by the integrals or derivatives of real or analytic functions}.
\bjournal{Math. Z.}
\bvolume{39}
\bpages{677--695}.
\bid{doi={10.1007/BF01201386}, issn={0025-5874}, mr={1545530}}
\end{barticle}
\bptok{imsref}%
\endbibitem

\bibitem{Karonski2002}
\begin{barticle}[mr]
\bauthor{\bsnm{Karo{\'n}ski},~\bfnm{Micha{\l}}\binits{M.}} \AND
\bauthor{\bsnm{{\L}uczak},~\bfnm{Tomasz}\binits{T.}}
(\byear{2002}).
\btitle{The phase transition in a random hypergraph}.
\bjournal{J. Comput. Appl. Math.}
\bvolume{142}
\bpages{125--135}.
\bid{doi={10.1016/S0377-0427(01)00464-2}, issn={0377-0427}, mr={1910523}}
\end{barticle}
\bptok{imsref}%
\endbibitem

\bibitem{Kordecki1990}
\begin{bincollection}[mr]
\bauthor{\bsnm{Kordecki},~\bfnm{Wojciech}\binits{W.}}
(\byear{1990}).
\btitle{Normal approximation and isolated vertices in random graphs}.
In \bbooktitle{Random Graphs '87 ({P}ozna\'n, 1987)}
\bpages{131--139}.
\blocation{Chichester}:
\bpublisher{Wiley}.
\bid{mr={1094128}}
\end{bincollection}
\bptok{imsref}%
\endbibitem

\bibitem{Kwong1992}
\begin{bbook}[mr]
\bauthor{\bsnm{Kwong},~\bfnm{Man~Kam}\binits{M.K.}} \AND
\bauthor{\bsnm{Zettl},~\bfnm{Anton}\binits{A.}}
(\byear{1992}).
\btitle{Norm Inequalities for Derivatives and Differences}.
\bseries{Lecture Notes in Math.}
\bvolume{1536}.
\blocation{Berlin}:
\bpublisher{Springer}.
\bid{mr={1223546}}
\end{bbook}
\bptok{imsref}%
\endbibitem

\bibitem{Mattner2007}
\begin{barticle}[mr]
\bauthor{\bsnm{Mattner},~\bfnm{Lutz}\binits{L.}} \AND
\bauthor{\bsnm{Roos},~\bfnm{Bero}\binits{B.}}
(\byear{2007}).
\btitle{A shorter proof of {K}anter's {B}essel function concentration bound}.
\bjournal{Probab. Theory Related Fields}
\bvolume{139}
\bpages{191--205}.
\bid{doi={10.1007/s00440-006-0043-0}, issn={0178-8051}, mr={2322695}}
\end{barticle}
\bptok{imsref}%
\endbibitem

\bibitem{McDonald1979}
\begin{barticle}[mr]
\bauthor{\bsnm{McDonald},~\bfnm{David~R.}\binits{D.R.}}
(\byear{1979}).
\btitle{On local limit theorem for integer valued random variables}.
\bjournal{Teor. Veroyatn. Primenen.}
\bvolume{24}
\bpages{607--614}.
\bid{issn={0040-361X}, mr={0541375}}
\end{barticle}
\bptok{imsref}%
\endbibitem

\bibitem{Penrose2010}
\begin{barticle}[mr]
\bauthor{\bsnm{Penrose},~\bfnm{Mathew~D.}\binits{M.D.}} \AND
\bauthor{\bsnm{Peres},~\bfnm{Yuval}\binits{Y.}}
(\byear{2011}).
\btitle{Local central limit theorems in stochastic geometry}.
\bjournal{Electron. J. Probab.}
\bvolume{16}
\bpages{2509--2544}.
\bid{doi={10.1214/EJP.v16-968}, issn={1083-6489}, mr={2869414}}
\end{barticle}
\bptok{imsref}%
\endbibitem

\bibitem{Penrose2005}
\begin{bincollection}[mr]
\bauthor{\bsnm{Penrose},~\bfnm{Mathew~D.}\binits{M.D.}} \AND
\bauthor{\bsnm{Yukich},~\bfnm{J.~E.}\binits{J.E.}}
(\byear{2005}).
\btitle{Normal approximation in geometric probability}.
In \bbooktitle{Stein's Method and Applications}.
\bseries{Lect. Notes Ser. Inst. Math. Sci. Natl. Univ. Singap.}
\bvolume{5}
\bpages{37--58}.
\blocation{Singapore}:
\bpublisher{Singapore Univ. Press}.
\bid{doi={10.1142/9789812567673_0003}, mr={2201885}}
\end{bincollection}
\bptok{imsref}%
\endbibitem

\bibitem{Petrov1975}
\begin{bbook}[mr]
\bauthor{\bsnm{Petrov},~\bfnm{V.~V.}\binits{V.V.}}
(\byear{1975}).
\btitle{Sums of Independent Random Variables}.
\blocation{New York}:
\bpublisher{Springer}.
\bnote{Translated from the Russian by A.A. Brown, Ergebnisse der Mathematik und ihrer Grenzgebiete, Band 82}.
\bid{mr={0388499}}
\end{bbook}
\bptok{imsref}%
\endbibitem

\bibitem{Posfai2009}
\begin{barticle}[mr]
\bauthor{\bsnm{P{\'o}sfai},~\bfnm{Anna}\binits{A.}}
(\byear{2009}).
\btitle{An extension of {M}ineka's coupling inequality}.
\bjournal{Electron. Commun. Probab.}
\bvolume{14}
\bpages{464--473}.
\bid{doi={10.1214/ECP.v14-1501}, issn={1083-589X}, mr={2559096}}
\end{barticle}
\bptok{imsref}%
\endbibitem

\bibitem{Rollin2005}
\begin{barticle}[mr]
\bauthor{\bsnm{R{\"o}llin},~\bfnm{Adrian}\binits{A.}}
(\byear{2005}).
\btitle{Approximation of sums of conditionally independent variables by the translated {P}oisson distribution}.
\bjournal{Bernoulli}
\bvolume{11}
\bpages{1115--1128}.
\bid{doi={10.3150/bj/1137421642}, issn={1350-7265}, mr={2189083}}
\end{barticle}
\bptok{imsref}%
\endbibitem

\bibitem{Rollin2007a}
\begin{barticle}[mr]
\bauthor{\bsnm{R{\"o}llin},~\bfnm{Adrian}\binits{A.}}
(\byear{2007}).
\btitle{Translated {P}oisson approximation using exchangeable pair couplings}.
\bjournal{Ann. Appl. Probab.}
\bvolume{17}
\bpages{1596--1614}.
\bid{doi={10.1214/105051607000000258}, issn={1050-5164}, mr={2358635}}
\end{barticle}
\bptok{imsref}%
\endbibitem

\bibitem{Rucinski1988}
\begin{barticle}[mr]
\bauthor{\bsnm{Ruci{\'n}ski},~\bfnm{Andrzej}\binits{A.}}
(\byear{1988}).
\btitle{When are small subgraphs of a random graph normally distributed?}
\bjournal{Probab. Theory Related Fields}
\bvolume{78}
\bpages{1--10}.
\bid{doi={10.1007/BF00718031}, issn={0178-8051}, mr={0940863}}
\end{barticle}
\bptok{imsref}%
\endbibitem

\bibitem{Stepanov1970}
\begin{barticle}[mr]
\bauthor{\bsnm{Stepanov},~\bfnm{V.~E.}\binits{V.E.}}
(\byear{1970}).
\btitle{The probability of the connectedness of a random graph {$\mathcal{G}\sb{m}(t)$}}.
\bjournal{Teor. Veroyatn. Primenen.}
\bvolume{15}
\bpages{58--68}.
\bid{issn={0040-361X}, mr={0270406}}
\end{barticle}
\bptok{imsref}%
\endbibitem

\end{thebibliography}
\end{document}